\documentclass[a4paper,11pt]{article}

\usepackage{amsmath,amssymb,amsthm}
\usepackage{mathtools}
\usepackage{enumitem}
\usepackage{booktabs}
\usepackage{array}
\usepackage[colorlinks=true,linkcolor=blue,citecolor=blue,urlcolor=blue]{hyperref}

\theoremstyle{definition}
\newtheorem{definition}{Definition}[section]
\theoremstyle{plain}
\newtheorem{theorem}[definition]{Theorem}
\newtheorem{proposition}[definition]{Proposition}
\newtheorem{lemma}[definition]{Lemma}
\newtheorem{corollary}[definition]{Corollary}
\newtheorem{conjecture}[definition]{Conjecture}
\theoremstyle{remark}
\newtheorem{remark}[definition]{Remark}

\newcommand{\g}{\mathfrak{g}}
\newcommand{\h}{\mathfrak{h}}
\newcommand{\osp}{\mathfrak{osp}}
\newcommand{\spn}{\mathrm{sp}}
\newcommand{\gb}{\mathrm{gb}}
\newcommand{\rank}{\operatorname{rank}}

\newcommand{\im}{\operatorname{Im}}
\newcommand{\Id}{\mathrm{Id}}

\newcommand{\Edp}[1]{E_{\delta_{#1}}}            
\newcommand{\Edm}[1]{E_{-\delta_{#1}}}           
\newcommand{\Etdp}[1]{E_{2\delta_{#1}}}          
\newcommand{\Etdm}[1]{E_{-2\delta_{#1}}}         
\newcommand{\Edpdp}[2]{E_{\delta_{#1}+\delta_{#2}}}  
\newcommand{\Edpdm}[2]{E_{\delta_{#1}-\delta_{#2}}}  
\DeclareMathOperator{\Ker}{Ker}
\DeclareMathOperator{\corank}{corank}

\title{On the triviality of inhomogeneous deformations of $\mathfrak{osp}(1|2n)$}
\author{Hisashi Aoi\thanks{Department of Mathematical Sciences, Ritsumeikan University}}
\date{April 2026}

\begin{document}
\maketitle

\begin{abstract}
We analyze the triviality of inhomogeneous $\gamma$-deformations of the
oscillator Lie superalgebra $B(0,n) = \osp(1|2n)$~\cite{BS17}.
As the main theorem, we show that for $n \geq 2$, 
the $\gamma$-deformation is trivial if and only if
all deformation parameters vanish.
The proof is based on the explicit construction of $2n$ certificates
(left null space vectors $c$ satisfying $c^\top A_\mu = 0$ and
$c^\top L_\mu \neq 0$) for the structure constant matrices $A_\mu$
of the coboundary operator.
We provide a unified construction of certificates classified into three
Families, and in particular clarify the geometric meaning of the
coefficient $1 + \delta_{n,2}$ that appears in the Family~III certificate.
We also discuss the contrast with the exceptional case of
$B(0,1) = \osp(1|2)$ (where all deformations are trivial).
As an appendix, we outline the computational verification performed
using exact rational arithmetic over $\mathbb{Q}$.
\end{abstract}

\medskip
\noindent\textbf{2020 Mathematics Subject Classification.}
17B56 (Cohomology of Lie (super)algebras),
17B60 (Lie (super)algebras associated with other structures),
15A03 (Vector spaces, linear dependence).

\medskip
\noindent\textbf{Keywords.}
Lie superalgebra, orthosymplectic algebra, deformation theory,
cohomology, certificate method, oscillator realization.

\section{Introduction}\label{sec:introduction}


The deformation theory of Lie superalgebras is an important area of research in mathematical physics and representation theory.
In \cite{Gerstenhaber1964,NijenhuisRichardson1964}, the deformation theory of algebraic structures, including Lie algebras, was developed,
and later the Chevalley--Eilenberg cohomology framework was extended
to Lie superalgebras by Scheunert and Zhang~\cite{Scheunert98};
see also~\cite{Leites75,Fuks86} for foundational work on cohomology of Lie superalgebras and infinite-dimensional Lie algebras.
Among the basic Lie superalgebras classified by Kac~\cite{Kac77},
the orthosymplectic series $\osp(1|2n)$ plays a distinguished role:
$\osp(1|2n)$ is the unique basic Lie superalgebra whose representation
theory is completely reducible, mirroring the classical properties
of semisimple Lie algebras.
Moreover, $\osp(1|2n)$ arises naturally in mathematical physics,
including oscillator realizations in supersymmetric quantum mechanics
and the structure theory of $W$-algebras~\cite{Frappat00}.


In \cite{BS17}, Bakalov and Sullivan study inhomogeneous deformations of Lie superalgebras through the framework of inhomogeneous bilinear forms.
They develop oscillator Lie superalgebras obtained from inhomogeneous bilinear forms, focusing on the case of $\osp(1|2)$, and construct a concrete example in which the corresponding 2-cocycle $\gamma$ is a coboundary, i.e., the deformation is trivial.
Though their construction can be applied to general Lie superalgebras arising
from oscillator realizations, the relation between the inhomogeneous bilinear forms and the triviality of deformations is not fully understood for other cases.


In this paper, we give a complete answer for the Lie superalgebras
$B(0,n) = \osp(1|2n)$ ($n \geq 1$).
We show that for $n \geq 2$, the $\gamma$-deformation of $\osp(1|2n)$ is trivial if and only if all deformation parameters vanish.
In contrast, we prove that all $\gamma$-deformations of $\osp(1|2)$
are trivial regardless of the values of the deformation parameters.
This sharp dichotomy reflects a structural difference in the coboundary operator:
for $n \geq 2$, the existence of certificates forces all deformation parameters to vanish,
whereas for $n = 1$ the image of the coboundary map already contains every deformation direction.


Our proof of the main theorem is based on the ``certificate method'',
which provides a systematic way to construct witnesses of non-triviality for each deformation parameter.
We also adopt computational approaches with AI-assisted methods to verify the rank of the structure constant matrices and to discover patterns in the certificates, which lead to a unified construction independent of $n$ for $n \geq 2$.
Refer to \S\ref{sec:certificate} for details.
This work is an example of collaboration between AI and human researchers, and the methodology is described in detail in \cite{AIWorkflow}.


The organization of the paper is as follows.
In \S\ref{sec:definitions}, we summarize the necessary definitions and notations related to the Lie superalgebra $\osp(1|2n)$, including its root system, oscillator realization, and central extension.
In \S\ref{sec:coboundary}, we analyze the structure of the coboundary space and derive a dimension formula for the $f$-variable space in each weight sector.
In \S\ref{sec:certificate}, we construct explicit certificates for each deformation parameter.
In \S\ref{sec:main}, we state and prove the main theorem on triviality, and discuss the exceptional case of $B(0,1)$ and a conjecture for general $B(m,n)$.
Finally, in Appendix~\ref{sec:implementation}, we outline the computational verification performed using exact rational arithmetic.

\section{Definitions and notations}\label{sec:definitions}

We first summarize the basic facts about the Lie superalgebra $B(0,n) = \osp(1|2n)$ 
related to the root system and oscillator realization and central extension.
For details, see \cite{Kac77,Frappat00,BS17}; for general background on Lie superalgebras we also refer to~\cite{Musson12}.
Our framework follows~\cite{BS17}, but we adopt the oscillator notation of Frappat et al.~\cite{Frappat00}, which differs in several conventions.

\subsection{Root system of the Lie superalgebra \texorpdfstring{$\osp(1|2n)$}{osp(1|2n)}}

It is well-known that $B(0,n) = \osp(1|2n)$ is one of the basic Lie superalgebras in the
classification of Kac~\cite{Kac77}, consisting of the even subalgebra
$\g_{\bar{0}} \cong \spn(2n)$ and the odd part $\g_{\bar{1}}$
(the fundamental representation of $\spn(2n)$).
Let $\{e_1,\ldots, e_n\}$ be the standard orthonormal basis of $\h^*$.

\begin{definition}[Root system of $\osp(1|2n)$]\label{def:roots}
The root system of $\osp(1|2n)$ consists of the following:
\begin{align}
  \Delta_{\bar{0}}^+ &= \{2\delta_i \mid 1 \leq i \leq n\}
    \cup \{\delta_i \pm \delta_j \mid 1 \leq i < j \leq n\}
    & &(|\Delta_{\bar{0}}^+| = n^2), \\
  \Delta_{\bar{1}}^+ &= \{\delta_i \mid 1 \leq i \leq n\}
    & &(|\Delta_{\bar{1}}^+| = n),
\end{align}
where $\delta_i = e_i$ in our setting. The simple root system is given by
$\Pi = \{\alpha_k = e_k - e_{k+1} \mid k = 1, \ldots, n-1\} \cup \{\alpha_n = e_n\}$.
\end{definition}

The basis of $\g$ consists of the following:
\begin{itemize}[nosep]
  \item Cartan elements $H_k$ ($k = 1, \ldots, n$): $H_k = h_{\alpha_k}$ (coroot of $\alpha_k$),
  \item Even root vectors $E_\alpha$ ($\alpha \in \Delta_{\bar{0}}$),
  \item Odd root vectors $\Edp{j}, \Edm{j}$ ($j = 1, \ldots, n$, roots $\pm\delta_j$).
\end{itemize}
The superdimension of $\g$ is $\dim(\g) = (2n^2 + n) \,|\, 2n$, with total dimension $2n^2 + 3n$.

\begin{lemma}[Eigenvalues of Cartan elements]\label{lem:cartan}
For $\alpha_k = e_k - e_{k+1}$ ($k < n$) or $\alpha_n = e_n$, the
corresponding Cartan element $H_k$ acts on a root vector $E_\alpha$
of root $\alpha$ with eigenvalue
\[
  [H_k, E_\alpha] = \langle \alpha_k, \alpha \rangle \cdot E_\alpha,
\]
where $\langle \cdot, \cdot \rangle$ is the bilinear form defined by
$\langle e_i, e_j \rangle = \delta_{ij}$ (including coroot normalization).
In particular, 
\begin{align}
  [H_k, \Edp{j}] &= (\delta_{jk} - \delta_{j,k+1}) \cdot \Edp{j} \quad (k < n), &
  [H_n, \Edp{j}] &= \delta_{jn} \cdot \Edp{j}.
\end{align}
\end{lemma}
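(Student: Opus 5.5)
This lemma is essentially definitional, and I would prove it in three short steps. The plan is to (i) fix the Cartan subalgebra $\h$ spanned by the $H_k$ and identify each $H_k$ with an element of $\h$ via the form $\langle\cdot,\cdot\rangle$; (ii) use the defining property of root vectors, $[h,E_\alpha]=\alpha(h)E_\alpha$ for all $h\in\h$; and (iii) evaluate $\alpha(H_k)$ for the root $\alpha=\delta_j$.

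\textbf{Step 1: identify $H_k$.} In the convention stated in the lemma, $H_k=h_{\alpha_k}$ is the element of $\h$ corresponding to $\alpha_k$ under the nondegenerate form $\langle e_i,e_j\rangle=\delta_{ij}$. That is, $\beta(H_k)=\langle\alpha_k,\beta\rangle$ for every $\beta\in\h^*$. I note that there is no rescaling by $2/\langle\alpha_k,\alpha_k\rangle$, which is what "including coroot normalization" is meant to absorb. Concretely, in the oscillator realization (Frappat et al.) I would take $\h$ spanned by $h_i=\tfrac12\{b_i^+,b_i^-\}$, with $\delta_j(h_i)=\delta_{ij}$. Then $H_k=h_k-h_{k+1}$ for $k<n$ and $H_n=h_n$.

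\textbf{Step 2: general formula.} Since $E_\alpha$ lies in the $\alpha$-root space, $[H_k,E_\alpha]=\alpha(H_k)E_\alpha=\langle\alpha_k,\alpha\rangle E_\alpha$. This holds equally for even and odd root vectors, because the bracket of an even element of $\h$ with an odd element is still the adjoint action.

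\textbf{Step 3: specialize to $\alpha=\delta_j=e_j$.} For $k<n$ we get $\langle e_k-e_{k+1},e_j\rangle=\delta_{jk}-\delta_{j,k+1}$, and for $k=n$ we get $\langle e_n,e_j\rangle=\delta_{jn}$. In the oscillator picture, $E_{\delta_j}\propto b_j^+$, and the identity $[h_i,b_j^+]=\delta_{ij}b_j^+$ yields both formulas directly.

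\textbf{Main obstacle.} The only real subtlety is normalization. With the standard coroot $h_{\alpha_n}$ for the odd simple root $\alpha_n=e_n$ (or the even coroot of $2\delta_n$), the eigenvalue on $E_{\delta_n}$ would be $2$ or $1$ depending on convention. I would resolve this by stating explicitly that $H_k$ is defined through the form $\langle\cdot,\cdot\rangle$ as in Step 1, which is consistent with the oscillator formulas. Once that is fixed, the remaining computation is immediate.
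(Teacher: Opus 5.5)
Your proposal is correct and takes the same approach as the paper. The paper states this lemma without a separate proof, treating it as immediate from $H_k=h_{\alpha_k}$ being dual to $\alpha_k$ under $\langle\cdot,\cdot\rangle$ (so $H_n$ corresponds to $e_n$ rather than the coroot $2e_n$, which is what ``including coroot normalization'' has to mean) and from evaluating $\langle\alpha_k,e_j\rangle$, exactly as in your Steps~1--3. The only caveat is in your optional oscillator check: under the paper's stated convention $[b_j^+,b_k^-]=\delta_{jk}$ one gets $[\tfrac12\{b_i^+,b_i^-\},b_i^+]=-b_i^+$, so there you would need $h_i=-\tfrac12\{b_i^+,b_i^-\}$ (your sign is right for the usual convention $[b_i^-,b_j^+]=\delta_{ij}$).
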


\subsection{Central extension and deformation parameters}\label{sec:oscillator}

The superalgebra $\osp(1|2n)$ is realized through the oscillator algebra.
Using the auxiliary fermion $a_0$ (parity $p(a_0) = 1$, $a_0^2 = \frac{1}{2}$)
and boson pairs $b_j^\pm$ ($j = 1, \ldots, n$, $[b_j^+, b_k^-] = \delta_{jk}$),
$\osp(1|2n)$ is generated by the following elements:
\begin{itemize}[nosep]
  \item Cartan elements $H_k$ ($k = 1, \ldots, n$): $H_k = h_{\alpha_k}$,
  \item Even root vectors $\Edpdm{i}{j} = b_i^+ b_j^-$ ($i < j$),
    $\Edpdp{i}{j} = b_i^+ b_j^+$ ($i < j$),
    $\Etdm{j} = b_j^- b_j^-$, etc.
  \item Odd root vectors $\Edp{j} = a_0 b_j^+$, $\Edm{j} = a_0 b_j^-$ ($j=1, \ldots, n$).
\end{itemize}

Consider the central extension $\tilde{\g} = \g \oplus \mathbb{R}\kappa$
by an odd central element $\kappa$ ($p(\kappa) = 1$, $\kappa^2 = 0$) of
the Lie superalgebra $\g$.
The extended bracket is defined using a 2-cocycle
$\gamma \colon \g \otimes \g \to \g$ as
\[
  [X, Y]_\gamma = [X, Y]_0 + \kappa \cdot \gamma(X, Y)
\]
This is a general construction corresponding to an element of
the Chevalley--Eilenberg cohomology $H^2(\g, \g)$ (with coefficients in the
adjoint module).

In the oscillator realization, $\gamma$ is concretely parametrized through
a modification of $a_0^2$:
\begin{equation}\label{eq:deformation}
  a_0^2 \longmapsto \frac{1}{2} + \kappa \sum_{j=1}^{n} \sum_{s \in \{+,-\}}
    \gb_{a_0, b_j^s} \cdot b_j^s,
\end{equation}
where $\gb_{a_0, b_j^s}$ are the \textbf{deformation parameters} ($2n$ in total).
$\gamma$ is a linear function of $\gb$, and since $\kappa$ is odd,
$\gamma$ is an odd 2-cocycle ($p(\gamma(X,Y)) = p(X) + p(Y) + 1$).

\begin{remark}[Parity and codomain conventions]\label{rem:parity}
We work over a base field $F$ of characteristic~$0$ (concretely $F = \mathbb{Q}$)
and use odd deformation symbols $\gb_{a_0,b_j^s}$ (parity~$1$).
Then $\kappa \cdot \gb_{a_0,b_j^s} \cdot b_j^s$ has parity
$1+1+0=0$, which is consistent with the parity of $a_0^2$.
Accordingly, $\gamma$ is an odd $2$-cocycle
($p(\gamma(X,Y)) = p(X)+p(Y)+1$), and the extended bracket
$[X,Y]_\gamma = [X,Y]_0 + \kappa\,\gamma(X,Y)$ preserves total parity.
For the matrix equations used in the proofs, we work componentwise in each
parameter direction and keep only the scalar coefficients in $F$.
\end{remark}

\subsection{Coboundary operator and triviality}\label{sec:coboundary-def}

\begin{definition}[Coboundary]\label{def:coboundary}
For an odd linear map $f:\g \to \g$ ($p(f(X)) = p(X) + 1$),
the coboundary $\delta f:\g \otimes \g \to \g$ is defined by
\begin{equation}\label{eq:coboundary}
  (\delta f)(X, Y) = (-1)^{p(X)}[X, f(Y)]
    - (-1)^{(p(X)+1)p(Y)}[Y, f(X)] - f([X, Y]).
\end{equation}
The oddness of $f$ is required so that $p(\kappa f(X)) = p(X)$ holds for
the even automorphism $\varphi = \Id + \kappa f$ on $\tilde{\g}$.
\end{definition}

\begin{definition}[Triviality]\label{def:triviality}
A $\gamma$-deformation is said to be \textbf{trivial} if there exists
an odd linear map $f \colon \g \to \g$ such that $\delta f = \gamma$.
This corresponds to the removal of the deformation by the even
automorphism $\varphi = \Id + \kappa f$.
\end{definition}

By weight decomposition, the triviality equation $\delta f = \gamma$
decomposes into independent weight sectors $\mu$:
\begin{equation}\label{eq:matrix}
  A_\mu \cdot \mathbf{f}_\mu = L_\mu \cdot \boldsymbol{\gb}_\mu,
\end{equation}
where
\begin{itemize}[nosep]
  \item $\mathbf{f}_\mu \in F^{d_\mu}$: the $f$-variable vector of weight $\mu$
    ($d_\mu = \dim C^1_\mu$, $F$ is a field of characteristic~$0$),
  \item $A_\mu$: the structure constant matrix (derived from brackets),
  \item $L_\mu$: the $\gamma$-structure matrix (derived from deformation parameters),
  \item $\boldsymbol{\gb}_\mu$: the $\gb$ parameters involved in weight $\mu$.
\end{itemize}
Each weight sector $\mu = \pm e_j$ ($j = 1, \ldots, n$) contains the
corresponding $\gb$ parameter $\gb_{a_0, b_j^\pm}$ and is related to
the others by the action of Weyl group $W(B_n)$  (the hyperoctahedral group consisting of
sign changes and permutations). 
By definition, each sector $\mu$ contains at most one $\gb$ parameter. So in our certificate argument for non-triviality (\S\ref{sec:certificate}), each condition $c^\top L_\mu \neq 0$ suffices to force the corresponding $\gb$ parameter to vanish. In the general case where $\boldsymbol{\gb}_\mu$ is a vector, the stronger condition $\rank([A_\mu \mid L_\mu]) > \rank(A_\mu)$ is required.

\section{Structure of the coboundary space}\label{sec:coboundary}

\subsection{Dimension formula}

\begin{proposition}[Dimension formula]\label{prop:dim}
For $n \geq 1$, the dimension of the $f$-variable space in any weight
sector $\mu = \pm e_j$ of $B(0,n)$ is
\[
  \dim C^1_\mu = 6n - 2.
\]
\end{proposition}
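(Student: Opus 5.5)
The plan is to identify $C^1_\mu$ with the space of odd linear maps $f\colon \g\to\g$ of weight $\mu$, and to count its basis directly. The convenient basis consists of the elementary maps $X\mapsto Y$, one for each ordered pair $(X,Y)$ of basis vectors of $\g$ with $p(Y)=p(X)+1$ and $\mathrm{wt}(Y)-\mathrm{wt}(X)=\mu$. The Weyl group $W(B_n)$ permutes the weight sectors $\pm e_j$ and preserves the multiset of root multiplicities, so all these sectors have the same dimension. I will therefore fix $\mu=e_j$ and count.

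The count uses Definition~\ref{def:roots}. The even part has weight $0$ with multiplicity $n$, spanned by the Cartan elements $H_k$; its other weights are $\pm 2e_i$ and $\pm e_i\pm e_l$ ($i\neq l$), each of multiplicity $1$. The odd part has weights $\pm e_k$, each of multiplicity $1$. By the parity condition, $f$ either sends even to odd or odd to even, and I count these two cases separately.

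\emph{Even $X$, odd $Y$.} Here $\mathrm{wt}(X)=\mathrm{wt}(Y)-e_j$.
\begin{itemize}[nosep]
\item $Y=\Edp{k}$, so $\mathrm{wt}(X)=e_k-e_j$. If $k=j$ this is weight $0$, giving $n$ choices $X=H_1,\dots,H_n$. If $k\neq j$ it is the root $e_k-e_j$, giving $1$ choice, hence $n-1$ over all such $k$.
\item $Y=\Edm{k}$, so $\mathrm{wt}(X)=-e_k-e_j$. If $k=j$ this is $-2e_j$, giving $1$ choice. If $k\neq j$ it gives $1$ choice each, hence $n-1$.
\end{itemize}
This case contributes $3n-1$.

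\emph{Odd $X$, even $Y$.} Here $\mathrm{wt}(Y)=\mathrm{wt}(X)+e_j$.
\begin{itemize}[nosep]
\item $X=\Edp{k}$, so $\mathrm{wt}(Y)=e_k+e_j$. This gives $1$ choice for $k=j$ (namely $\Etdp{j}$) and $1$ each for $k\neq j$.
\item $X=\Edm{k}$, so $\mathrm{wt}(Y)=e_j-e_k$. This gives $n$ choices $Y=H_l$ for $k=j$, and $1$ each for $k\neq j$.
\end{itemize}
This case also contributes $3n-1$, for a total of $6n-2$.

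I do not expect a real obstacle. The one point needing care is the convention for $C^1_\mu$. It must be all odd linear maps of weight $\mu$, with no quotient by anything, and the Cartan subalgebra must enter with its full multiplicity $n$; the $n$-dependence through $H_k$ is exactly where the $2n$ in the count comes from. A second check is that for $n=1$ there are no roots $e_k\pm e_j$ with $k\neq j$. In that case the formula gives $4$, coming from $H_1\mapsto\Edp{1}$, $\Etdm{1}\mapsto\Edm{1}$, $\Edp{1}\mapsto\Etdp{1}$ and $\Edm{1}\mapsto H_1$, which is consistent with the statement holding for all $n\geq1$.
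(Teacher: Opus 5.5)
Your proposal is correct and follows essentially the same route as the paper. You reduce by $W(B_n)$-symmetry to a single short-root sector and count the basis pairs $(X,Z)$ with $\mathrm{wt}(Z)-\mathrm{wt}(X)=\mu$. Your split by parity direction and by $k=j$ versus $k\neq j$ yields exactly the paper's eight categories, totaling $2(3n-1)=6n-2$.
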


\begin{proof}
Since the Weyl group $W(B_n)$ acts transitively on the set of
short roots $\{\pm e_j\}_{j=1,\ldots,n}$,
it suffices to take $\mu = e_1$ as a representative.

The $f$-variables $f(X \to Z) := f(X)\big|_Z$ (the $Z$-component of $f(X)$)
correspond to pairs $(X, Z)$ of basis
elements satisfying $\mathrm{wt}(Z) - \mathrm{wt}(X) = \mu$.
Based on root types, they are classified into the following 8 categories
(for $\mu = e_1$):

\begin{center}
\begin{tabular}{clll r}
  \toprule
  Input $X$ type & Output $Z$ type & Correspondence & Count \\
  \midrule
  $H_k$ & $\Edp{1}$ & $e_1 - 0 = e_1$ & $n$ \\
  $\Edm{1}$ & $H_k$ & $0 - (-e_1) = e_1$ &  $n$ \\
  $\Edm{j}$ ($j \neq 1$) & $\Edpdm{1}{j}$  & $(e_1 - e_j)-(-e_j)=e_1$ & $n-1$ \\
  $\Edp{j}$ ($j \neq 1$) & $\Edpdp{1}{j}$ & $(e_1 + e_j) - e_j = e_1$ & $n-1$ \\
  $\Etdm{1}$ & $\Edm{1}$ & $(-e_1) - (-2e_1) = e_1$ & $1$ \\
  $\Edp{1}$ & $\Etdp{1}$ & $2e_1 - e_1 = e_1$ & $1$ \\
  $\Edpdm{j}{1}$ ($j \neq 1$) & $\Edp{j}$ & $e_j - (e_j - e_1) = e_1$ & $n-1$ \\
  $E_{-\delta_1-\delta_j}$ ($j \neq 1$) & $\Edm{j}$ & $(-e_j) - (-e_1 - e_j) = e_1$ & $n-1$ \\
    \bottomrule
\end{tabular}
\end{center}

So we get the total $n + n + (n-1) + (n-1) + 1 + 1 + (n-1) + (n-1) = 6n - 2$.
(In rows 1--2, $\mathrm{wt}(H_k) = 0$ for all $k = 1,\ldots,n$,
so all $n$ Cartan elements contribute regardless of the value of $k$.)
\end{proof}

\subsection{Rank and kernel}

\begin{remark}[Lower bound on the corank]\label{rem:corank}
Since the certificates $c$ constructed in \S\ref{sec:certificate}
satisfy $c^\top A_\mu = 0$, we have $\corank(A_\mu) \geq 1$ in each
weight sector.
That is, $\dim \Ker(\delta_\mu) \geq 1$, so the $f$-variables have
at least one degree of freedom.
The concrete value of the rank ($\rank(A_\mu) = 6n - 3$) has been
computationally verified for $n = 2, 3, 4, 5$ in
Appendix~\ref{sec:implementation}.
We emphasize that this exact rank formula is \emph{not} used in the
proof of the main theorem (Theorem~\ref{thm:main}); the certificate
method requires only the existence of a non-trivial left null vector
$c$ with $c^\top L \neq 0$.
\end{remark}

\section{Construction of certificates}\label{sec:certificate}

\subsection{The certificate method}

\begin{definition}[Certificate]\label{def:certificate}
A \textbf{certificate} (witness of non-triviality) for a weight sector
$\mu$ is a vector $c$ in the left null space of the matrix $A_\mu$ in
equation~\eqref{eq:matrix} that satisfies $c^\top L_\mu \neq 0$.
\end{definition}

\begin{remark}[On the terminology]\label{rem:certificate-terminology}
``Certificate'' is the terminology adopted in this paper.
It has essentially the same structure as an infeasibility certificate
in the sense of Farkas' lemma in linear programming~\cite{Schrijver98}.
The conditions $c^\top A = 0$ and $c^\top L \neq 0$ provide, in the present
per-sector one-parameter setting, a dual
proof (obstruction) that the system $Ax = Lg$ has no solution for
$g \neq 0$.
As we mention above, each condition $c^\top L_\mu \neq 0$ only concerns a single parameter in $\gb$. In general situations, we need to check the condition $\rank([A_\mu\mid L_\mu]) > \rank(A_\mu)$.
\end{remark}

\begin{lemma}[Non-triviality via certificates]\label{lem:certificate}
If a certificate $c$ for the weight sector $\mu$ exists and satisfies
$c^\top L_\mu = \lambda \cdot \gb_i$ ($\lambda \neq 0$), then
$\gb_i = 0$ is necessary for a trivial deformation
(in the sense of Definition~\ref{def:triviality}).
\end{lemma}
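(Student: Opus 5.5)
The argument is the one-line dual (Farkas-type) obstruction described in Remark~\ref{rem:certificate-terminology}, applied to the sector equation~\eqref{eq:matrix}. First, assume the $\gamma$-deformation is trivial in the sense of Definition~\ref{def:triviality}, so there is an odd linear map $f\colon\g\to\g$ with $\delta f=\gamma$. Since both $\delta$ and the parametrization of $\gamma$ by $\gb$ respect the weight grading, we project $\delta f=\gamma$ onto the weight-$\mu$ components. This gives a vector $\mathbf{f}_\mu\in F^{d_\mu}$, namely the collection of the $f$-variables $f(X\to Z)$ of weight $\mu$ listed in Proposition~\ref{prop:dim}, such that $A_\mu\mathbf{f}_\mu=L_\mu\boldsymbol{\gb}_\mu$ holds exactly.

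Next, multiply this identity on the left by $c^\top$. The left side gives $c^\top A_\mu\mathbf{f}_\mu=0$, because $c$ lies in the left null space of $A_\mu$. The right side gives $c^\top L_\mu\boldsymbol{\gb}_\mu=\lambda\,\gb_i$ by hypothesis. Hence $\lambda\,\gb_i=0$, and since $\lambda\neq0$ in a field of characteristic~$0$, we conclude $\gb_i=0$. The conclusion is independent of the choice of $f$, so $\gb_i=0$ is necessary for triviality.

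The only point needing care is the justification of the reduction to \eqref{eq:matrix}. One must check that the weight decomposition of $\delta f=\gamma$ really splits into independent sectors. This holds because $\delta$ commutes with the adjoint $\h$-action, and because each matrix entry is a scalar in $F$ under the componentwise convention of Remark~\ref{rem:parity}. The odd parameter symbols are treated as formal coordinates, so the cancellation of $\lambda$ is legitimate. I expect this bookkeeping about parity and scalars to be the only mild obstacle; the linear algebra itself is immediate.
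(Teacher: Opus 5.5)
Your proposal is correct and is essentially the paper's proof: from $A_\mu\mathbf{f}_\mu = L_\mu\boldsymbol{\gb}_\mu$ you left-multiply by $c^\top$, use $c^\top A_\mu = 0$ to get $0=\lambda\,\gb_i$, and cancel $\lambda\neq 0$. Your extra justification of the reduction to the weight-$\mu$ sector equation is something the paper takes for granted. It is harmless, though only $\lambda\neq 0$ in a field is needed, and characteristic~$0$ plays no role in the cancellation.
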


\begin{proof}
When $\delta f = \gamma$, i.e.,
$A_\mu \mathbf{f}_\mu = L_\mu \boldsymbol{\gb}_\mu$ holds,
from $c^\top A_\mu = 0$ we get
$0 = c^\top A_\mu \mathbf{f}_\mu = c^\top L_\mu \boldsymbol{\gb}_\mu = \lambda \cdot \gb_i$.
Since $\lambda \neq 0$, we conclude $\gb_i = 0$.
\end{proof}

In what follows, we construct a certificate for each of the $2n$
deformation parameters.
Each certificate is a linear combination of 3 or 4 components of the
form $c_k \cdot (\delta f)(X_k, Y_k)\big|_{Z_k}$.
We will divide the certificates into three Families (I, II, III) based on their structure.

\subsection{Family I certificates (\texorpdfstring{$j = 1, \ldots, n-1$}{j = 1, ..., n-1})}

\begin{proposition}[Family I]\label{prop:familyI}
For $n \geq 2$ and $1 \leq j \leq n-1$, the certificate
$c_j^+$ for $\gb_{a_0, b_j^+}$ is defined by the following $4$ components.

\medskip
\noindent\textbf{Case $j = 1$:}

\begin{center}
\renewcommand{\arraystretch}{1.3}
\begin{tabular}{c >{$}c<{$} >{$}c<{$} r}
\toprule
Component & (X_k,\; Y_k) & Z_k & Coefficient $c_k$ \\
\midrule
1 & (H_1,\; \Etdp{1}) & \Edp{1}   & $+1$ \\
2 & (H_1,\; \Edp{1})  & H_2       & $+2$ \\
3 & (H_1,\; \Edm{1})  & \Etdm{1}  & $-4$ \\
4 & (\Etdp{1},\; \Edm{1}) & H_2   & $+1$ \\
\bottomrule
\end{tabular}
\end{center}

\noindent\textbf{Case $j \geq 2$:}

\begin{center}
\renewcommand{\arraystretch}{1.3}
\begin{tabular}{c >{$}c<{$} >{$}c<{$} r}
\toprule
Component & (X_k,\; Y_k) & Z_k & Coefficient $c_k$ \\
\midrule
1 & (H_{j-1},\; \Etdp{j}) & \Edp{j}   & $-1$ \\
2 & (H_{j-1},\; \Edp{j})  & H_{j+1}   & $-2$ \\
3 & (H_{j-1},\; \Edm{j})  & \Etdm{j}  & $+4$ \\
4 & (\Etdp{j},\; \Edm{j}) & H_{j+1}   & $+1$ \\
\bottomrule
\end{tabular}
\end{center}

These satisfy:
\[
c_j^{+\top} A_\mu = 0, \qquad
c_j^{+\top} L_\mu = 4 \cdot \gb_{a_0, b_j^+}.
\]
\end{proposition}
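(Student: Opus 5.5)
Since the statement is an explicit algebraic identity, the proof is a direct verification, organized so that only a bounded number of $f$-variables has to be tracked. For $j=1$ take the sector $\mu$ containing $\gb_{a_0,b_1^+}$; by the Weyl-group remark after \eqref{eq:matrix}, the case $j\ge2$ is analogous. Note that the four components listed do not literally have weight $Z_k-X_k-Y_k=e_1$: for example, $(H_1,\Etdp{1})\to\Edp{1}$ has weight $-e_1$. So the first step is to fix the sign and index conventions by which a component $(\delta f)(X_k,Y_k)|_{Z_k}$ is a row of $A_\mu$. I would read $\mu$ as the weight of $f$ shifted by the odd $\kappa$-direction, so that all four rows lie in one sector, and check this uniformly for the four rows before doing anything else.

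The core of the argument is to expand each of the four rows via Definition~\ref{def:coboundary} as a linear form in the variables $f(X\to Z)$ of Proposition~\ref{prop:dim}, and to show the combination $\sum_k c_k(\cdot)$ vanishes identically. The expansions are short:
\begin{itemize}[nosep]
\item Row 1, $(H_1,\Etdp{1})|_{\Edp{1}}$, involves $f(\Etdp{1}\to\cdot)$ bracketed with $H_1$, the term $[\Etdp{1},f(H_1)]|_{\Edp{1}}$, and $f([H_1,\Etdp{1}])=2f(\Etdp{1})$.
\item Rows 2 and 3 involve $f(\Edp{1}\to\cdot)$ and $f(\Edm{1}\to\cdot)$, again together with $f(H_1)$ components.
\item Row 4, $(\Etdp{1},\Edm{1})|_{H_2}$, involves $[\Etdp{1},f(\Edm{1})]$, $[\Edm{1},f(\Etdp{1})]$ and $f([\Etdp{1},\Edm{1}])\propto f(\Edp{1})$.
\end{itemize}
I would tabulate, for each variable that appears, the coefficient it receives from each row. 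Eigenvalues come from Lemma~\ref{lem:cartan}; off-diagonal brackets such as $[\Etdp{1},\Edm{1}]$ and $[\Edp{1},\Edm{1}]$ come from the oscillator realization ($a_0^2=\tfrac12$, $[b^+,b^-]=1$). I would then check that every column sums to zero with weights $(1,2,-4,1)$. The point of using $H_2$ (resp.\ $H_{j+1}$) as output is that $\langle\alpha_2,e_1\rangle=0$, which kills several diagonal terms; I expect exactly this to make the cancellation close.

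Next I would compute $c^\top L_\mu$. The deformation enters only through $a_0^2$, so only brackets of two odd elements acquire $\kappa$-terms. For example,
\[
[\Edp{1},\Edm{1}] = a_0^2\{b_1^+,b_1^-\}+\dots,
\]
and under \eqref{eq:deformation} this picks up $\kappa\,\gb_{a_0,b_1^+}\,b_1^+(\cdots)$. I would evaluate which of the four rows receive such a contribution: only rows with two odd arguments or an odd output $Z$ should contribute, namely rows 2 and 4, and possibly 3. Summing with the coefficients should give $4\,\gb_{a_0,b_1^+}$.

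For $j\ge2$ the same tables apply after replacing $H_1$ by $H_{j-1}$. Since $\langle\alpha_{j-1},e_j\rangle=-1$ while $\langle\alpha_1,e_1\rangle=+1$, the eigenvalues flip sign, which accounts for the overall sign change of rows 1--3; row 4 has no Cartan input and keeps its coefficient.

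The main obstacle I expect is the bookkeeping of super-signs and normalizations: the factors $(-1)^{p(X)}$ and $(-1)^{(p(X)+1)p(Y)}$ from the odd $f$, the normalization of $\Etdp{j}=b_j^+b_j^+$ relative to $H_k$ in the oscillator basis, and the exact $\kappa$-coefficient in $L_\mu$. I would settle these by fixing the realization concretely: compute every bracket as a commutator of oscillator polynomials, and expand $H_k$ in the basis $b_i^+b_i^-+\tfrac12$. As a cross-check, I would confirm the vanishing numerically for $n=2$ against the exact-arithmetic computation of Appendix~\ref{sec:implementation}.
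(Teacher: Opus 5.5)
The half that proves $c_j^{+\top}A_\mu=0$ matches the paper's proof. The half that proves $c_j^{+\top}L_\mu=4\,\gb_{a_0,b_j^+}$ has a genuine gap, and the paper does not close it either.

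\textbf{The $A_\mu$ part.} You expand the four rows by Definition~\ref{def:coboundary} and check that $f(\Etdp{1}\to\Edp{1})$, $f(H_1\to\Edm{1})$, $f(\Edp{1}\to H_2)$ and $f(\Edm{1}\to\Etdm{1})$ cancel with weights $(1,2,-4,1)$. The case $j\ge2$ then follows from the sign flip of the Cartan eigenvalues. This is the paper's argument.

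Your weight remark is right, and it needs no reinterpretation through $\kappa$. All four rows and all four variables have weight $-e_1$, so they lie in one sector; it is the paper's pairing of sectors with parameters that is off.

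Two points for when you fix the realization. First, the tables close only for $[b_j^-,b_j^+]=1$, i.e.\ $h_j=b_j^+b_j^-+\tfrac12$ as you wrote; with $[b_j^+,b_j^-]=1$ as in \S\ref{sec:definitions}, the $f(\Etdp{1}\to\Edp{1})$ column sums to $-2$. Second, the $H_2$-projection works because $h_1=H_1+\dots+H_n$ gives $[\Edp{1},\Edm{1}]|_{H_2}=1$ and $[\Etdp{1},\Etdm{1}]|_{H_2}=-4$. It does not depend on $\langle\alpha_2,e_1\rangle=0$, since an $\operatorname{ad}H_1$-term never has a Cartan component.

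\textbf{The gap: $c_j^{+\top}L_\mu=4\,\gb_{a_0,b_j^+}$.} You let the deformation act only through $a_0^2$, so that only odd--odd brackets are deformed. But none of the four pairs is odd--odd: row~1 is even--even and rows~2--4 are even--odd. The only odd output is $\Edp{1}$ in row~1. So your own criterion gives $c^\top L_\mu=0$, and ``rows 2 and 4, possibly 3'' does not follow from it. Besides, an $a_0^2$-correction to $\{a_0b_i^s,a_0b_k^t\}$ is cubic in the bosons, so it is not even in $\g$.

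The paper instead uses a $\gamma$ that vanishes on even--even pairs but not on even--odd pairs. It attributes all of $4\gb_{a_0,b_1^+}$ to row~2, via $c_2\,\gamma(H_1,\Edp{1})|_{H_2}=2\cdot2\,\gb_{a_0,b_1^+}$, and declares rows~3 and~4 free of $\gb_{a_0,b_1^+}$. So the ingredient you are missing is an explicit $\gamma$ on even--odd pairs.

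Be warned, however, that the paper only asserts these values, and they do not survive a direct check. By your own weight count, row~3 lies in the same sector $-e_1$ as row~2, so it cannot carry $\gb_{a_0,b_1^-}$ as the paper says.

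Write $\gb=\gb_{a_0,b_1^+}$ and take the model $[a_0,b_j^s]=\kappa\,\gb_{a_0,b_j^s}$ to first order, with $[b_j^-,b_j^+]=1$. In this model the paper's own $n=1$ solution for $\gb_{a_0,b_1^+}$, namely $f(H_1)=-\gb\,\Edm{1}$ and $f(\Etdp{1})=-2\gb\,\Edp{1}$, does solve $\delta f=\gamma$. For $n\ge2$:
\begin{itemize}
\item Rows~2, 3 and~4 receive $-\gb$, $-\gb$ and $-2\gb$, so $c^\top L_\mu=2(-\gb)-4(-\gb)+(-2\gb)=0$.
\item Equivalently, $f=-\gb\,\operatorname{ad}\Edm{1}$ on $\g_{\bar0}$ and $f=0$ on $\g_{\bar1}$ satisfies $\delta f=\gamma$ on all four rows.
\item The same cancellation occurs for $j\ge2$.
\end{itemize}

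Hence neither concrete reading of the deformation produces $4\gb_{a_0,b_j^+}$. The step you cannot close is not actually established by the paper, and your planned exact cross-check would most likely expose this rather than confirm the proposition.
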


\subsection{Family II certificates (\texorpdfstring{$j = 1, \ldots, n-1$}{j = 1, ..., n-1})}

\begin{proposition}[Family II]\label{prop:familyII}
For $n \geq 2$ and $1 \leq j \leq n-1$, define the certificate
$c_j^-$ for $\gb_{a_0, b_j^-}$ as follows:

For $j = 1$ (3 components):
\begin{align}
  c_1^- &= (-2)\cdot(\delta f)(H_1, \Edm{1})\big|_{H_1}
  + (+2)\cdot(\delta f)(H_1, \Edm{1})\big|_{H_2} \notag\\
&\quad  + (+1)\cdot(\delta f)(\Etdp{2}, \Edm{1})\big|_{\Etdp{2}}.
\end{align}

For $j \geq 2$ (3 components):
\begin{align}
  c_j^- &= (+2)\cdot(\delta f)(H_{j-1}, \Edm{j})\big|_{H_j}
  + (-2)\cdot(\delta f)(H_{j-1}, \Edm{j})\big|_{H_{j+1}} \notag\\
  &\quad + (+1)\cdot(\delta f)(\Etdp{j+1}, \Edm{j})\big|_{\Etdp{j+1}}.
\end{align}

These satisfy:
\[
  c_j^{-\top} A_\mu = 0, \qquad
  c_j^{-\top} L_\mu = 2 \cdot \gb_{a_0, b_j^-}.
\]
\end{proposition}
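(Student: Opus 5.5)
Since the certificate is an explicit linear combination of three components of $\delta f$, I will verify the two identities in Proposition~\ref{prop:familyII} by direct expansion rather than by a structural argument. Each component $(\delta f)(X,Y)\big|_Z$ is, by \eqref{eq:coboundary}, a linear form in the $f$-variables $f(U\to V)$ of the sector $\mu=-e_j$; its coefficients are the entries of the corresponding row of $A_\mu$. The claim $c_j^{-\top}A_\mu=0$ is therefore exactly the statement that, in the combination defining $c_j^-$, every $f$-variable occurs with total coefficient zero. The claim $c_j^{-\top}L_\mu = 2\,\gb_{a_0,b_j^-}$ is the analogous statement for the $\gamma$-side: only the odd--odd brackets feed $\gamma$ through \eqref{eq:deformation}, and $\kappa\,\gamma(X,Y)\big|_Z$ must be collected in the same way.

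\textbf{Case $j\ge 2$.} I begin with the first two components $(H_{j-1},\Edm{j})$, projected to $H_j$ and to $H_{j+1}$.
\begin{itemize}[nosep]
\item By Lemma~\ref{lem:cartan}, $[H_{j-1},\Edm{j}] = +\Edm{j}$, since $\langle \alpha_{j-1},-e_j\rangle = 1$. The term $-f([H_{j-1},\Edm{j}])$ therefore contributes $-f(\Edm{j}\to H_i)$ for $i=j,j+1$.
\item The term $[H_{j-1}, f(\Edm{j})]$ has zero projection onto the Cartan subalgebra.
\item The term $-(-1)^{p(Y)}[\Edm{j}, f(H_{j-1})]$ projected to $H_i$ only sees the $\Edp{j}$-component of $f(H_{j-1})$. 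This uses $[\Edm{j},\Edp{j}]\in\h$, which I will write explicitly in the $H_k$ basis via the oscillator realization, namely $\{a_0b_j^-,a_0b_j^+\}$.
\end{itemize}
With coefficients $+2$ and $-2$, the $f(\Edm{j}\to H_j)$ and $f(\Edm{j}\to H_{j+1})$ pieces do not cancel by themselves. They have to be killed, together with the $f(H_{j-1}\to\Edp{j})$ piece, by the third component $(\delta f)(\Etdp{j+1},\Edm{j})\big|_{\Etdp{j+1}}$.

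\textbf{The third component.} Here $[\Etdp{j+1},\Edm{j}]=0$, so no $f([X,Y])$ term appears. The remaining terms are:
\begin{itemize}[nosep]
\item $[\Etdp{j+1}, f(\Edm{j})\big|_{H}]$, which pairs with $f(\Edm{j}\to H_k)$ with coefficients $\langle 2e_{j+1}, \alpha_k^\vee\rangle$. These are nonzero only for $k=j$ and $k=j+1$, with opposite signs, and this should match the $\pm2$ pattern.
\item $[\Edm{j}, f(\Etdp{j+1})]$ projected to $\Etdp{j+1}$. This requires an $f(\Etdp{j+1}\to Z')$ with $[\Edm{j},Z']\propto \Etdp{j+1}$, which is impossible by weights, so the term vanishes.
\end{itemize}
The $f(H_{j-1}\to\Edp{j})$ contributions from the first two components must then cancel among themselves. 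This holds if $[\Edm{j},\Edp{j}]$ has equal $H_j$- and $H_{j+1}$-coordinates. In the coroot basis, $h_{e_j}$ is proportional to a sum $H_j+H_{j+1}+\dots+H_n$ with tail-equal coefficients, so the difference vanishes.

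\textbf{The $L$ side.} The only odd--odd bracket among the three components is absent, since every pair contains an even element. So I expect the $\gb$ contribution to come from how $\gamma$ is defined on pairs involving $\Edm{j}$ through the deformed $a_0^2$. Concretely, $[H,\Edm{j}]$ in the oscillator realization involves $a_0^2 b_j^-$ terms, and the substitution \eqref{eq:deformation} produces $\kappa\,\gb_{a_0,b_j^-}$ times a Cartan-type term. I will compute $\gamma(H_{j-1},\Edm{j})\big|_{H_j}$ and $\gamma(H_{j-1},\Edm{j})\big|_{H_{j+1}}$ and check that $2(\cdot)-2(\cdot)$ gives $2\gb_{a_0,b_j^-}$.

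\textbf{Case $j=1$.} The same computation applies with $H_1$ in place of $H_{j-1}$ and $\Etdp{2}$ as the companion. Now $\langle\alpha_1,-e_1\rangle=-1$, which flips the signs and explains the coefficients $(-2,+2,+1)$.

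\textbf{Main obstacle.} The main obstacle is the sign bookkeeping: the super-signs in \eqref{eq:coboundary}, and the exact normalization of $[\Edm{j},\Edp{j}]$ and of $\gamma$ on each pair, which are conventions of~\cite{Frappat00} and~\cite{BS17}. I would fix these first by working out $n=2$ completely, then check that nothing depends on $n$ except through the coroot expansion of $h_{e_j}$.
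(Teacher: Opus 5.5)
\textbf{The $A$ side.} Your check of $c_j^{-\top}A_\mu=0$ follows the paper's proof. You use the same three $f$-variables $f(\Edm{j}\to H_j)$, $f(\Edm{j}\to H_{j+1})$, $f(H_{j-1}\to\Edp{j})$ and the same pairwise cancellations. Your reason for the last cancellation is a clean substitute for the slot-set $n$-invariance lemma: $[\Edm{j},\Edp{j}]\propto h_{e_j}=H_j+H_{j+1}+\cdots+H_n$ has equal $H_j$- and $H_{j+1}$-coordinates. One sign must be fixed. In the third component the coefficient of $f(\Edm{j}\to H_k)$ is $-\langle\alpha_k,2e_{j+1}\rangle$, because $[\Etdp{j+1},H_k]=-[H_k,\Etdp{j+1}]$. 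This gives $+2$ for $k=j$ and $-2$ for $k=j+1$; with your sign those two rows would total $-4$ and $+4$. You should also keep the paper's normalization $[H_n,\Edp{j}]=\delta_{jn}\Edp{j}$ rather than $\alpha_n^\vee=2e_n$, or the case $j=n-1$ breaks.

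\textbf{The $L$ side: this is the genuine gap.} This side is what makes $c_j^-$ a certificate, since $c^\top A_\mu=0$ alone says nothing about $\gb_{a_0,b_j^-}$. You never determine $\gamma$ on the three pairs, and what you do say does not lead to the claimed value.
\begin{itemize}
\item Your opening claim that only odd--odd brackets feed $\gamma$ would give $c^\top L_\mu=0$, since, as you note, none of the pairs is odd--odd.
\item The substitute mechanism is not available. The Cartan elements are boson bilinears, so $[H,a_0b_j^-]=a_0[H,b_j^-]$ contains no $a_0^2$.
\item Even granting an $a_0^2b_j^-$ term, substituting \eqref{eq:deformation} attaches $\gb_{a_0,b_j^-}$ to $(b_j^-)^2=\Etdm{j}$. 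The only Cartan-type piece is $\gb_{a_0,b_j^+}\,b_j^+b_j^-$, so it carries the wrong parameter.
\item Most decisively, any Cartan-type term built from $b_j^+b_j^-$ lies along $h_{e_j}$. It therefore has equal $H_j$- and $H_{j+1}$-coordinates, which is exactly the property you use to kill $f(H_{j-1}\to\Edp{j})$. Such a term is annihilated by $2(\cdot)|_{H_j}-2(\cdot)|_{H_{j+1}}$, so your planned check would return $0$.
\end{itemize}

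\textbf{What the paper uses instead.} The paper's proof rests on an asymmetric input.
\begin{itemize}
\item In the first component, $\gamma$ projected to $H_j$ is $\pm\gb_{a_0,b_j^-}$, with the sign of the coefficient $\pm2$, so the product is $+2\gb_{a_0,b_j^-}$.
\item The $H_{j+1}$-projection of the same $\gamma$-value contains no $\gb_{a_0,b_j^-}$.
\item The third pair $(\Etdp{j+1},\Edm{j})$ contributes no $\gb_{a_0,b_j^-}$ in the $\Etdp{j+1}$ direction. This pair is also even--odd, and your plan never examines its $\gamma$-contribution.
\end{itemize}
Without these three specific values of $\gamma$, the identity $c_j^{-\top}L_\mu=2\,\gb_{a_0,b_j^-}$ remains unproved.
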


To prove these propositions, we will show that these properties are independent of $n$.

\subsection{\texorpdfstring{$n$}{n}-invariance}

\begin{lemma}[$n$-invariance of Family I/II]\label{thm:n-inv}
The Family I/II certificates are independent of $n$ for fixed $j$
in the range $n \geq j + 1$. That is, the set of $f$-variables
constituting $c^\top A = 0$, their coefficients, and the value of
$c^\top L$ are all invariant with respect to $n$.
\end{lemma}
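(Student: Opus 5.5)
The plan is a \emph{locality} argument. The condition $c^\top A_\mu = 0$ is a finite list of scalar identities, one for each $f$-variable $f(X\to Z)$ that appears in some component $(\delta f)(X_k,Y_k)|_{Z_k}$. I will show that every such identity, and the value of $c^\top L_\mu$, involves only brackets among basis elements indexed by $\{j,j+1\}$ (and $j-1$ when $j\ge 2$). I will also show that these brackets do not depend on $n$ once $n\ge j+1$.

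The first step is to expand each component using \eqref{eq:coboundary}. A component $(\delta f)(X_k,Y_k)|_{Z_k}$ is a sum of three kinds of terms: $\pm[X_k,\cdot]$ applied to the $f(Y_k)$-components, $\pm[Y_k,\cdot]$ applied to the $f(X_k)$-components, and $-f([X_k,Y_k])|_{Z_k}$. Since $X_k$ is either a Cartan element $H_{j-1}$, $H_1$, or a root vector of type $2\delta_{j}$ or $2\delta_{j+1}$, the map $\mathrm{ad}\,X_k$ is diagonal or has a very restricted image. Concretely:
\begin{itemize}[nosep]
\item[(a)] For $X_k=H_m$, we have $[H_m,f(Y)]|_Z=\langle\alpha_m,\mathrm{wt}(Z)\rangle f(Y\to Z)$ by Lemma~\ref{lem:cartan}. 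This involves a single $f$-variable, and its coefficient depends only on the indices in $\alpha_m$ and $Z$.
\item[(b)] For $X_k=\Etdp{j}$ or $\Etdp{j+1}$, acting in the oscillator realization as $b^+b^+$, the preimages of $Z_k$ under $\mathrm{ad}\,X_k$ are root vectors or Cartan elements supported on the index $j$ or $j+1$.
\item[(c)] The term $f([X_k,Y_k])$ has $[X_k,Y_k]$ a multiple of a single basis vector involving only these indices.
\end{itemize}
Carrying this out gives an explicit list of the $f$-variables that occur, for example $f(\Etdp{j}\to\Edp{j})$, $f(\Edm{j}\to H_m)$ and $f(\Edp{j}\to\Etdp{j})$, with coefficients computed from the oscillator brackets. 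These coefficients are universal polynomial expressions in the commutators of $a_0,b_j^\pm,b_{j\pm1}^\pm$, so they are independent of $n$.

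The second step is to check that no other $f$-variable enters. The only possible source of $n$-dependence is a Cartan element $H_m$ with $m$ arbitrary appearing as an output, since rows 1--2 of Proposition~\ref{prop:dim} involve all $n$ Cartan elements. Here I will use that the outputs $Z_k$ are fixed (for example $H_{j+1}$, $\Etdm{j}$). Terms like $[\Etdp{j},f(\Edm{j})]|_{H_{j+1}}$ then only pick up $f(\Edm{j}\to E_{-\delta_j-\dots})$-type components determined by $j$ and $j+1$. In the basis $H_k=h_{\alpha_k}$, the coroots involving $e_j,e_{j+1}$ are $H_{j-1},H_j,H_{j+1}$, with $H_n$ special. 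This is where the hypothesis $n\ge j+1$ enters: it guarantees that $H_{j+1}$ is the generic coroot $h_{e_{j+1}-e_{j+2}}$, or $h_{e_n}$ when $j+1=n$.

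I expect this $j+1=n$ boundary to be the main obstacle. When $j+1=n$, $H_{j+1}=H_n$ has the form $\alpha_n=e_n$ rather than $e_{n}-e_{n+1}$, so the Cartan eigenvalues in (a) could change. I will handle this by noting that only the pairing $\langle\alpha_{j+1},e_{j+1}\rangle=1$ and the expansion of $[\Etdp{j},\Edm{j}]$ in the $H$-basis are used. The former equals $1$ in both cases. For the latter, I will compute the $H_{j+1}$-coefficient of $h_{e_j}$ expressed in simple coroots and verify directly that it agrees in both cases.

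With locality and the boundary case settled, the identities $c^\top A_\mu=0$ and the values $c^\top L_\mu=4\gb_{a_0,b_j^+}$ (respectively $2\gb_{a_0,b_j^-}$) are the same finite computation for every $n\ge j+1$. The $L_\mu$ value comes only from $\gamma$ evaluated on pairs where $a_0^2$ arises, namely odd--odd brackets within index $j$, so it too is $n$-independent.
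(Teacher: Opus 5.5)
The genuine gap is your last step, on $c^\top L_\mu$. Your treatment of $c^\top A_\mu$ is sound and is the same locality argument as the paper's.

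\textbf{What works.} On $A_\mu$ you are more careful than the paper at the one place where $n$ can leak in. The basis $H_k=h_{\alpha_k}$ is not local: with the normalization of Lemma~\ref{lem:cartan}, the Cartan element dual to $e_j$ is $H_j+H_{j+1}+\dots+H_n$. So the following really must be checked at $j+1=n$:
\begin{itemize}[nosep]
\item the $H_j$- and $H_{j+1}$-coefficients of $[\Etdp{j},\Etdm{j}]$ and $[\Edp{j},\Edm{j}]$;
\item the pairings $\langle\alpha_j,e_{j+1}\rangle$ and $\langle\alpha_{j+1},e_{j+1}\rangle$ that govern $[\Etdp{j+1},f(\Edm{j})]|_{\Etdp{j+1}}$.
\end{itemize}
They turn out uniform, though they would not be under the usual normalization $\alpha_n(H_n)=2$. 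Your expansion also accounts for $\Etdp{j+1}$ (and, for $j\ge 2$, $H_j$), which Family~II uses although the paper's slot set $S_j$ omits them. Two slips: $[\Etdp{j},\Edm{j}]$ is a multiple of $\Edp{j}$, not a Cartan element; and the Family~I variable is $f(\Edm{j}\to\Etdm{j})$, not $f(\Edp{j}\to\Etdp{j})$.

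\textbf{The gap.} You assert that $L_\mu$ only sees $\gamma$ on odd--odd pairs, where $a_0^2$ is produced. But no component of a Family~I or~II certificate is an odd--odd pair. The components are $(H_{j-1},\Etdp{j})$, which is even--even, and the even--odd pairs $(H_{j-1},E_{\pm\delta_j})$, $(\Etdp{j},\Edm{j})$, $(\Etdp{j+1},\Edm{j})$, with $H_1$ in place of $H_{j-1}$ when $j=1$. Taken at face value, your description therefore gives $c^\top L_\mu=0$. That contradicts the values $4\gb_{a_0,b_j^+}$ and $2\gb_{a_0,b_j^-}$ whose $n$-invariance you are meant to prove, so this step does not track the right quantity.

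In the paper's setup (proofs of Propositions~\ref{prop:familyI} and~\ref{prop:familyII}), the nonzero contributions come from even--odd pairs with Cartan outputs: $\gamma(H_{j-1},\Edp{j})|_{H_{j+1}}$ and $\gamma(H_{j-1},\Edm{j})|_{H_j}$. One must also check that the other selected entries, such as $\gamma(\Etdp{j},\Edm{j})|_{H_{j+1}}$ and $\gamma(\Etdp{j+1},\Edm{j})|_{\Etdp{j+1}}$, carry no $\gb_{a_0,b_j^\pm}$ for any $n$.

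To close the gap, run on these $\gamma$-entries the same analysis you ran on $A_\mu$. The oscillator products involved use only $a_0$ and $b^\pm_{j-1},b^\pm_j,b^\pm_{j+1}$. The only place $n$ can enter is the expansion of Cartan outputs in $H_1,\dots,H_n$, and that needs the same boundary check at $j+1=n$.
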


\begin{proof}
For Families I and II, the generators appearing in each certificate
belong to the $j$-th slot set $S_j$ (defined below).
The brackets $[X, Y]$ ($X, Y \in S_j$) within $S_j$
depend only on the eigenvalues $\langle \alpha_k, \alpha \rangle$
for $k = j-1, j, j+1$ and the $e_j, e_{j+1}$ components of $\alpha$,
and do not depend on $n$.
In the expansion of $(\delta f)$, the $f$-variables $f(W \to V)$
lie within the range $W, V \in S_j$,
so under the condition $n \geq j + 1$, all structure constants are
determined and independent of $n$.
\end{proof}

By this lemma, to verify the certificate identities of
Propositions~\ref{prop:familyI} and~\ref{prop:familyII},
it suffices to check a single value of $n$
satisfying $n \geq j + 1$ for each $j$.
In the proofs below, we verify the cases $j = 1$ (using $n = 2$)
and $j \geq 2$ (using $n = j + 1$), from which the identities
hold for all $n \geq j + 1$.

\begin{proof}[Proof of Proposition~\ref{prop:familyI}]
\textbf{(i) Proof that $c^\top A = 0$.}

The $f$-variables appearing in the expansion of $c^\top A$ are restricted to
the $j$-th slot set
\[
S_j =
\begin{cases}
  \{H_1,\; H_2,\; \Etdp{1},\; \Etdm{1},\; \Edp{1},\; \Edm{1}\}
    & (j = 1), \\[4pt]
  \{H_{j-1},\; H_{j+1},\; \Etdp{j},\; \Etdm{j},\; \Edp{j},\; \Edm{j}\}
    & (j \geq 2),
\end{cases}
\]
and by $n$-invariance (Lemma~\ref{thm:n-inv}), it suffices to verify the
two cases below.

\medskip
\noindent\textbf{Case $j = 1$}:
The four $f$-variables contribute as follows:

\begin{center}
\begin{tabular}{l *{4}{>{$\hfil$}p{1.4cm}<{\hfil}} r}
\toprule
$f$-variable
  & Comp.~1 & Comp.~2
  & Comp.~3 & Comp.~4 & Total \\
\midrule
$f(\Etdp{1} \to \Edp{1})$
  & $-1$ & & & $+1$ & $0$ \\
$f(H_1 \to \Edm{1})$
  & $+2$ & $+2$ & $-4$ & & $0$ \\
$f(\Edp{1} \to H_2)$
  & & $-2$ & & $+2$ & $0$ \\
$f(\Edm{1} \to \Etdm{1})$
  & & & $+4$ & $-4$ & $0$ \\  
\bottomrule
\end{tabular}
\end{center}

\medskip
\noindent\textbf{Case $j \geq 2$}:
The sign of the Cartan eigenvalue flips:
$[H_1, \Edp{1}] = +\Edp{1}$ but $[H_{j-1}, \Edp{j}] = -\Edp{j}$
for $j \geq 2$ (see Lemma~\ref{lem:cartan}).
This reverses the sign of the $f(H_{j-1} \to \Edm{j})$ row:

\begin{center}
\begin{tabular}{l *{4}{>{\hfil}p{1.4cm}<{\hfil}} r}
\toprule
$f$-variable
  & Comp.~1 & Comp.~2
  & Comp.~3 & Comp.~4 & Total \\
\midrule
$f(\Etdp{j} \to \Edp{j})$
  & $-1$ & & & $+1$ & $0$ \\
$f(H_{j-1} \to \Edm{j})$
  & $-2$ & $-2$ & $+4$ & & $0$ \\
$f(\Edp{j} \to H_{j+1})$
  & & $-2$ & & $+2$ & $0$ \\
$f(\Edm{j} \to \Etdm{j})$
  & & & $+4$ & $-4$ & $0$ \\
\bottomrule
\end{tabular}
\end{center}

\medskip
\textbf{(ii) Proof that $c^\top L = 4\cdot\gb_{a_0, b_j^+}$.}

Contributions to $c^\top L$ arise only from components whose brackets
involve even--odd pairs with a $\gamma$-deformation term.

\begin{itemize}[nosep]
\item Component 1: $(H_{j(\pm 1)}, \Etdp{j})$ is an even--even pair
  $\to$ no $\gamma$ $\to$ contribution $0$.
\item Component 2: $(H_{j(\pm 1)}, \Edp{j})$ is an even--odd pair
  $\to$ $\gamma$ present.
  Since $\gamma(H_{j(\pm 1)}, \Edp{j})\big|_{H_{j\pm 1}}
  = (\pm 2) \cdot \gb_{a_0, b_j^+}$,
  this component contributes
  $c_2 \cdot (\pm 2) \cdot \gb_{a_0, b_j^+}
  = (\pm 2)(\pm 2) \cdot \gb_{a_0, b_j^+}
  = 4 \cdot \gb_{a_0, b_j^+}$.
\item Component 3: $(H_{j(\pm 1)}, \Edm{j})$ is an even--odd pair
  $\to$ $\gamma$ present, but
  $\gamma(H_{j(\pm 1)}, \Edm{j})\big|_{\Etdm{j}}$ contains
  $\gb_{a_0, b_j^-}$, not $\gb_{a_0, b_j^+}$ $\to$ contribution $0$.
\item Component 4: $(\Etdp{j}, \Edm{j})$ is an even--odd pair,
  but $\gamma(\Etdp{j}, \Edm{j})\big|_{H_{j\pm 1}}$ has no
  $\gb_{a_0, b_j^+}$ component in the $H_{j\pm 1}$ direction
  $\to$ contribution $0$.
\end{itemize}

Hence
\[
c^\top L = 4 \cdot \gb_{a_0, b_j^+}.
\]
By Lemma~\ref{thm:n-inv}, the above identities hold for all
$n \geq j + 1$; they have been confirmed by direct
calculation\footnote{Computational verification using exact rational arithmetic; see Appendix.} for $n = 2$.
\end{proof}

\begin{proof}[Proof of Proposition~\ref{prop:familyII}]
\textbf{(i) Proof that $c^\top A = 0$.}

As with Family I, the $f$-variables are restricted to the $j$-th slot set
$S_j$ and possess $n$-invariance (Lemma~\ref{thm:n-inv}).

\medskip
\noindent\textbf{Case $j = 1$}:
The three $f$-variables contribute as follows:

\begin{center}
\begin{tabular}{l *{3}{>{$\hfil$}p{1.4cm}<{\hfil}} r}
\toprule
$f$-variable
  & Comp.~1 & Comp.~2
  & Comp.~3 & Total \\
\midrule
$f(\Edm{1} \to H_1)$
  & $-2$ & & $+2$ & $0$ \\
$f(H_1 \to \Edp{1})$
  & $-2$ & $+2$ & & $0$ \\
$f(\Edm{1} \to H_2)$
  & & $+2$ & $-2$ & $0$ \\
\bottomrule
\end{tabular}
\end{center}

\medskip
\noindent\textbf{Case $j \geq 2$}:
By the Cartan eigenvalue sign flip
$[H_1, \Edm{1}] = -\Edm{1}$ vs.\
$[H_{j-1}, \Edm{j}] = +\Edm{j}$ for $j \geq 2$,
the $f(H_{j-1} \to \Edp{j})$ row reverses sign:

\begin{center}
\begin{tabular}{l *{3}{>{$\hfil$}p{1.4cm}<{\hfil}} r}
\toprule
$f$-variable
  & Comp.~1 & Comp.~2
  & Comp.~3 & Total \\
\midrule
$f(\Edm{j} \to H_j)$
  & $-2$ & & $+2$ & $0$ \\
$f(H_{j-1} \to \Edp{j})$
  & $+2$ & $-2$ & & $0$ \\
$f(\Edm{j} \to H_{j+1})$
  & & $+2$ & $-2$ & $0$ \\
\bottomrule
\end{tabular}
\end{center}

\medskip
\textbf{(ii) Proof that $c^\top L = 2\cdot\gb_{a_0, b_j^-}$.}

Contributions to $c^\top L$ arise only from components whose brackets
involve even--odd pairs with a $\gamma$-deformation term.

\begin{itemize}[nosep]
\item Component 1: $(H_{j(\pm 1)}, \Edm{j})$ is an even--odd pair
  $\to$ $\gamma$ present.
  Since $\gamma(H_{j(\pm 1)}, \Edm{j})\big|_{H_j}
  = (\pm 1) \cdot \gb_{a_0, b_j^-}$,
  this component contributes
  $(\pm 2) \cdot (\pm 1) \cdot \gb_{a_0, b_j^-}
  = 2 \cdot \gb_{a_0, b_j^-}$.
\item Component 2: $(H_{j(\pm 1)}, \Edm{j})$ is the same even--odd pair
  as component~1, but projected onto $H_{j\pm 1}$.
  The $\gamma$-output $\gamma(H_{j(\pm 1)}, \Edm{j})$
  has no $\gb_{a_0, b_j^-}$ component in the $H_{j\pm 1}$ direction
  $\to$ contribution $0$.
\item Component 3: $(\Etdp{j+1}, \Edm{j})$ is an even--odd pair.
  The component
  \[
    \gamma(\Etdp{j+1}, \Edm{j})\big|_{\Etdp{j+1}}
  \]
  has no $\gb_{a_0, b_j^-}$ contribution, so this term contributes $0$.
\end{itemize}

Hence
\[
c^\top L = 2 \cdot \gb_{a_0, b_j^-}.
\]
As in the proof of Proposition~\ref{prop:familyI},
by Lemma~\ref{thm:n-inv} the above identities hold for all
$n \geq j + 1$, and have been confirmed by direct calculation for $n = 2$.
\end{proof}

Note that the remaining case (Family III) does \emph{not} enjoy $n$-invariance.

\subsection{Family III certificates}\label{sec:familyIII}

Family III corresponds to $\gb_{a_0, b_n^\pm}$ (the last index), and
since the generators used depend on $n$, it does not possess the
$n$-invariance of Families I/II.

\begin{lemma}[Cartan eigenvalue for Family III]\label{lem:familyIII-eigenvalue}
The eigenvalue of $H_{n-1}$ ($n \geq 2$) on $\Edp{1}$ and $\Edm{1}$ is
\[
  [H_{n-1}, \Edp{1}] = \delta_{n,2} \cdot \Edp{1}, \qquad
  [H_{n-1}, \Edm{1}] = -\delta_{n,2} \cdot \Edm{1}.
\]
That is, the eigenvalue is $\pm 1$ when $n = 2$ and $0$ when $n \geq 3$.
\end{lemma}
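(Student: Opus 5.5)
This is a direct application of Lemma~\ref{lem:cartan}, so the plan is a two-case computation on the root pairing. By Lemma~\ref{lem:cartan}, $H_{n-1}$ acts on a root vector $E_\alpha$ by the scalar $\langle \alpha_{n-1}, \alpha\rangle$. For $n \geq 2$ the index $k = n-1$ satisfies $k < n$, so $\alpha_{n-1} = e_{n-1} - e_n$ is a long simple root rather than the short root $\alpha_n = e_n$. The odd root vectors $\Edp{1}$ and $\Edm{1}$ have roots $\pm\delta_1 = \pm e_1$. The eigenvalues are therefore
\[
  \langle e_{n-1} - e_n, \pm e_1\rangle = \pm(\delta_{1,n-1} - \delta_{1,n}).
\]
Equivalently, one can read them off from the displayed special case $[H_k, \Edp{j}] = (\delta_{jk} - \delta_{j,k+1})\Edp{j}$ with $k = n-1$ and $j = 1$.

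Next I evaluate $\delta_{1,n-1} - \delta_{1,n}$ under $n \geq 2$. The term $\delta_{1,n}$ vanishes because $n \neq 1$, and $\delta_{1,n-1} = \delta_{n,2}$. Hence $[H_{n-1}, \Edp{1}] = \delta_{n,2}\,\Edp{1}$. For $\Edm{1}$ the pairing is linear in $\alpha$, so the sign reverses and $[H_{n-1}, \Edm{1}] = -\delta_{n,2}\,\Edm{1}$. I will cross-check this against the Family~II proof, which uses $[H_1, \Edm{1}] = -\Edm{1}$; that is exactly the case $n = 2$.

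The only point that needs care is that Lemma~\ref{lem:cartan} displays the formula only for $\Edp{j}$, not for $\Edm{j}$. I will justify the negative case either by linearity of $\langle\alpha_k,\cdot\rangle$ in $\alpha$ or by the oscillator realization. In the latter, $H_{n-1}$ is proportional to $b_{n-1}^+b_{n-1}^- - b_n^+b_n^-$ (up to ordering constants), and it commutes with $a_0 b_1^\pm$ whenever $1 \notin \{n-1, n\}$, which again gives $0$ for $n \geq 3$. Finally, I state the interpretation: the eigenvalue is $\pm1$ when $n = 2$, and $0$ when $n \geq 3$ because index $1$ is then disjoint from $\{n-1, n\}$.
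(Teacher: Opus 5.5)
Your proposal is correct and is essentially the paper's own argument: apply Lemma~\ref{lem:cartan} with $\alpha_{n-1} = e_{n-1}-e_n$ and roots $\pm e_1$, then use $n \geq 2$ to reduce $\langle e_{n-1}-e_n, \pm e_1\rangle$ to $\pm\delta_{n,2}$. The differences are cosmetic: you state explicitly that the $\delta_{1,n}$ term vanishes for $n \geq 2$, which the paper drops silently, and your oscillator cross-check is an optional extra that the proof does not need.
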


\begin{proof}
$H_{n-1} = h_{\alpha_{n-1}}$ corresponds to the simple root
$\alpha_{n-1} = e_{n-1} - e_n$.
Since $\Edp{1}$ has root $e_1$ and $\Edm{1}$ has root $-e_1$,
\[
  [H_{n-1}, \Edp{1}] = \langle e_{n-1} - e_n, e_1 \rangle \cdot \Edp{1}
  = \delta_{1,n-1} \cdot \Edp{1}
  = \delta_{n,2} \cdot \Edp{1},
\]
and similarly
$[H_{n-1}, \Edm{1}] = \langle e_{n-1} - e_n, -e_1 \rangle \cdot \Edm{1}
= -\delta_{n,2} \cdot \Edm{1}$.
\end{proof}

\begin{proposition}[Family III certificate]\label{prop:familyIII}
For $n \geq 2$, define the certificate $c_n^-$ for $\gb_{a_0, b_n^-}$
by the following 4 components:

\begin{center}
\small
\renewcommand{\arraystretch}{1.25}
\setlength{\tabcolsep}{4pt}
\begin{tabular}{c >{\centering\arraybackslash}p{4.8cm} c c}
  \toprule
  Component & $(X_k, Y_k)$ & Output $Z_k$ & Coeff. $c_k$ \\
  \midrule
  1 & $(H_{n-1},\; \Edpdm{1}{n})$ & $\Edm{1}$ & $1$ \\
  2 & $(H_{n-1},\; \Edp{1})$ & $\Edpdp{1}{n}$ & $1 + \delta_{n,2}$ \\
  3 & $(H_{n-1},\; \Edm{n})$ & $H_2$ & $-1$ \\
  4 & $(\Edpdm{1}{n},\; \Edp{1})$ & $H_2$ & $1$ \\
  \bottomrule
\end{tabular}
\end{center}

Similarly, define the certificate $c_n^+$ for $\gb_{a_0, b_n^+}$
by the following 4 components:

\begin{center}
\small
\renewcommand{\arraystretch}{1.25}
\setlength{\tabcolsep}{4pt}
\begin{tabular}{c >{\centering\arraybackslash}p{4.8cm} c c}
  \toprule
  Component & $(X_k, Y_k)$ & Output $Z_k$ & Coeff. $c_k$ \\
  \midrule
  1 & $(H_{n-1},\; \Edpdp{1}{n})$ & $\Edp{1}$ & $-1$ \\
  2 & $(H_{n-1},\; \Edm{1})$ & $\Edpdm{1}{n}$ & $1 + \delta_{n,2}$ \\
  3 & $(H_{n-1},\; \Edp{n})$ & $H_2$ & $-1$ \\
  4 & $(\Edpdp{1}{n},\; \Edm{1})$ & $H_2$ & $1$ \\
  \bottomrule
\end{tabular}
\end{center}

These satisfy:
\[
  c_n^{\pm\top} A_\mu = 0, \qquad
  c_n^{\pm\top} L_\mu = 1 \cdot \gb_{a_0, b_n^\pm}.
\]
\end{proposition}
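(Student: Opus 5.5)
We follow the pattern of the proofs of Propositions~\ref{prop:familyI} and~\ref{prop:familyII}. Each certificate identity reduces to two bookkeeping checks. First, expand each of the four components $(\delta f)(X_k,Y_k)\big|_{Z_k}$ via Definition~\ref{def:coboundary} into the $f$-variables of the sector $\mu=\pm e_n$, and show that the weighted sum with coefficients $c_k$ cancels identically; this gives $c^\top A_\mu=0$. Second, collect the $\gamma$-contributions to get $c^\top L_\mu$. We treat $c_n^-$ in detail and obtain $c_n^+$ by the sign-change symmetry $e_n\mapsto -e_n$ in $W(B_n)$. Under this symmetry $\Edpdm{1}{n}\leftrightarrow\Edpdp{1}{n}$, $\Edm{n}\leftrightarrow\Edp{n}$ and $\Edp{1}\leftrightarrow\Edm{1}$, up to the normalization sign that appears as $c_1=\mp1$. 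Because $n$-invariance (Lemma~\ref{thm:n-inv}) is unavailable, every structure constant is written with an explicit $\delta_{n,2}$ dependence. The only source of such dependence is the action of $H_{n-1}$ on $E_{\pm\delta_1}$, which Lemma~\ref{lem:familyIII-eigenvalue} supplies.

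\textbf{Expansion of $c_n^-$.} In component~1, $(\delta f)(H_{n-1},\Edpdm{1}{n})\big|_{\Edm{1}}$ produces three terms:
\begin{itemize}[nosep]
\item $[H_{n-1},f(\Edpdm{1}{n})]\big|_{\Edm{1}}$, giving $-\delta_{n,2}\,f(\Edpdm{1}{n}\to\Edm{1})$ by Lemma~\ref{lem:familyIII-eigenvalue};
\item $-[\Edpdm{1}{n},f(H_{n-1})]\big|_{\Edm{1}}$, which picks up $f(H_{n-1}\to\Edm{n})$ via $[\Edpdm{1}{n},\Edm{n}]\propto\Edm{1}$;
\item $-f([H_{n-1},\Edpdm{1}{n}])$, giving $-\langle\alpha_{n-1},e_1-e_n\rangle f(\Edpdm{1}{n}\to\Edm{1})=-(1+\delta_{n,2})\,f(\Edpdm{1}{n}\to\Edm{1})$.
\end{itemize}
The same procedure handles the remaining components. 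Component~2 produces $f(\Edp{1}\to\Edpdp{1}{n})$ with eigenvalue factor $\delta_{n,2}-\langle\alpha_{n-1},e_1+e_n\rangle$, together with an $f(H_{n-1}\to\cdot)$ term. Component~3 produces $f(\Edm{n}\to H_2)$, an $H_{n-1}$-eigenvalue term on $\Edm{n}$ (value $+1$), and an $f(H_{n-1}\to\Edp{n})$-type term. Component~4 produces $f(\Edp{1}\to\Edpdp{1}{n})$, $f(\Edpdm{1}{n}\to\Edm{1})$, and $f([\Edpdm{1}{n},\Edp{1}])=f(\cdot\,\Edp{1}\to H_2)$-type terms.

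\textbf{Cancellation table.} We tabulate the contributions as in the Family~I/II proofs and check that each row sums to zero. The crucial row is the one where $\delta_{n,2}$ enters from two places: the eigenvalue of Lemma~\ref{lem:familyIII-eigenvalue} and the pairing $\langle\alpha_{n-1},e_1\pm e_n\rangle$. Cancelling the $\delta_{n,2}$-dependent part of component~1 against component~4 forces the coefficient of component~2 to be $1+\delta_{n,2}$. When $n=2$ we have $H_{n-1}=H_1$, so the two ``distant'' Cartan actions collapse onto the same slot and the coefficient doubles. We would verify the identity symbolically for general $n$, split into the cases $n=2$ and $n\ge3$, and cross-check against the exact computations of Appendix~\ref{sec:implementation} for $n=2,3,4$.

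\textbf{The $L$-side and the main obstacle.} For $c^\top L_\mu$ only even--odd pairs can contribute, namely components~2,~3 and~4. Mirroring part~(ii) of the previous proofs, we read off from the deformation~\eqref{eq:deformation} that only component~3, $\gamma(H_{n-1},\Edm{n})\big|_{H_2}$, carries $\gb_{a_0,b_n^-}$. Its coefficient is $-1$ times a structure constant of $-1$, which gives $c_n^{-\top}L_\mu=\gb_{a_0,b_n^-}$. Components~2 and~4 carry only $\gb$'s of other weights in these output slots. The main obstacle is sign and normalization consistency: the Cartan output $H_2$ for general $n$, the super-signs $(-1)^{(p(X)+1)p(Y)}$ in~\eqref{eq:coboundary}, and the oscillator normalizations of $\Edpdm{1}{n}$ and $\Edpdp{1}{n}$. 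To pin these down we would first carry out the whole computation for $n=2$ and $n=3$ as the two representative regimes ($\delta_{n,2}=1$ and $\delta_{n,2}=0$), then argue that for $n\ge3$ the structure constants involved do not depend on $n$ beyond the indices $1,n-1,n$.
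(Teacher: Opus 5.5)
Your outline has the same shape as the paper's proof: a cancellation table over the $f$-variables split into $n=2$ and $n\ge 3$, then a tally of $\gamma$-terms over the even--odd components. However, the $L$-side is accounted for in a way that conflicts with the paper. You credit all of $c_n^{-\top}L_\mu$ to component~3, as $(-1)\cdot(-1)$ for every $n$, and say component~2 contributes nothing. In the paper it is the reverse:
\begin{itemize}
\item Component~2 gives $(1+\delta_{n,2})\,\gamma(H_{n-1},\Edp{1})\big|_{\Edpdp{1}{n}}=(1+\delta_{n,2})\,\gb_{a_0,b_n^-}$.
\item $\gamma(H_{n-1},\Edm{n})\big|_{H_2}$ contains $\gb_{a_0,b_n^-}$ only when the output slot $H_2$ is the $H_n$-slot, i.e.\ $n=2$. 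So component~3 gives $-\delta_{n,2}\,\gb_{a_0,b_n^-}$.
\end{itemize}
The coefficient $1+\delta_{n,2}$ exists precisely to absorb that $n=2$ term; in your accounting it would play no role in $c^\top L$. Your structure constant $-1$ for component~3 at $n\ge 3$ is asserted, not derived. Your dismissal of component~2 on weight grounds also cannot work: components~2 and~3 have the same shift $\mathrm{wt}(Z_k)-\mathrm{wt}(X_k)-\mathrm{wt}(Y_k)=e_n$, so no weight argument separates them.

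On the $A$-side you never actually produce the table. Terms appear as ``an $f(H_{n-1}\to\cdot)$ term'' or ``$\cdot$-type terms'' and are deferred to a symbolic or computer check, so $c_n^{-\top}A_\mu=0$ is not proved. What is written contains errors.
\begin{itemize}
\item $[\Edpdm{1}{n},\Edm{n}]$ has weight $e_1-2e_n\neq -e_1$, so it cannot produce $\Edm{1}$.
\item Your mechanism for $1+\delta_{n,2}$ is a non sequitur. Your expansion of component~1 puts $-(1+2\delta_{n,2})$ on $f(\Edpdm{1}{n}\to\Edm{1})$. The only other component touching that variable is component~4 (coefficient~$1$), and component~2 never feeds it. So no choice of $c_2$ can repair an $n=2$ imbalance there; the paper's table has $-1$ against $+1$ in this row in both regimes.
\item In the paper the $\delta_{n,2}$-sensitive row is instead $f(H_{n-1}\to\Edp{n})$. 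It receives $-1$ from component~1 and $+1$ from component~2 when $n\ge 3$. At $n=2$ component~3 also enters, again because $H_2=H_n$, giving $-1+2-1=0$; the row $f(\Edp{1}\to\Edpdp{1}{n})$ just rescales to $-2,+2$.
\end{itemize}
Finally, the reflection $e_n\mapsto -e_n$ does not carry $c_n^-$ to the stated $c_n^+$. It fixes $\Edp{1}$ and $\Edm{1}$ rather than swapping them, and it sends $H_{n-1}=h_{e_{n-1}-e_n}$ to $h_{e_{n-1}+e_n}$. No element of $W(B_n)$ realizes the exchange $\delta_1\leftrightarrow-\delta_1$, $\delta_n\leftrightarrow-\delta_n$, $\delta_1-\delta_n\leftrightarrow\delta_1+\delta_n$: sending $e_1\mapsto -e_1$ and $e_n\mapsto -e_n$ forces $e_1-e_n\mapsto -e_1+e_n$. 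This is why the paper handles $c_n^+$ by repeating the computation with those exchanges rather than by a symmetry.
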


\begin{proof}
We give the proof for $c_n^-$; the proof for $c_n^+$ is entirely analogous,
with the exchanges $\Edp{1} \leftrightarrow \Edm{1}$,
$\Edp{n} \leftrightarrow \Edm{n}$,
and $\Edpdp{1}{n} \leftrightarrow \Edpdm{1}{n}$.
By Lemma~\ref{lem:familyIII-eigenvalue},
the component involving $\Edm{1}$ in $c_n^+$
receives the same correction factor $1 + \delta_{n,2}$,
and the cancellation argument proceeds identically.

\medskip
\textbf{(i) Proof that $c^\top A = 0$}.

We consider cases according to the value of $n$.

\medskip
\textbf{Case 1: $n \geq 3$}.
By Lemma~\ref{lem:familyIII-eigenvalue}, $[H_{n-1}, \Edp{1}] = 0$,
so no extra $f$-variables arise from the expansion of component~2.
The $f$-variables appearing in $c^\top A$ are the following 4 only:

\begin{center}
\begin{tabular}{lccccr}
  \toprule
  $f$-variable & Comp.~1 & Comp.~2 & Comp.~3 & Comp.~4 & Total \\
  \midrule
  $f(\Edpdm{1}{n} \to \Edm{1})$ & $-1$ & & & $+1$ & $0$ \\
  $f(\Edp{1} \to \Edpdp{1}{n})$ & & $-1$ & & $+1$ & $0$ \\
  $f(\Edm{n} \to H_2)$ & & & $+1$ & $-1$ & $0$ \\
  $f(H_{n-1} \to \Edp{n})$ & $-1$ & $+1$ & & & $0$ \\
  \bottomrule
\end{tabular}
\end{center}

Each $f$-variable contributes to exactly 2 components with opposite
signs, so they cancel completely.

\medskip
\textbf{Case 2: $n = 2$}.
$[H_1, \Edp{1}] = +\Edp{1} \neq 0$, but the correction
$c_2 = 1 + \delta_{2,2} = 2$ ensures that the $f$-variables still
number 4 and all cancel:

\begin{center}
\begin{tabular}{lccccr}
  \toprule
  $f$-variable & \makebox[1.2cm]{Comp.~1} & \makebox[1.2cm]{Comp.~2} & \makebox[1.2cm]{Comp.~3}
    & \makebox[1.2cm]{Comp.~4} & Total \\
  \midrule
  $f(\Edpdm{1}{2} \to \Edm{1})$ & $-1$ & & & $+1$ & $0$ \\
  $f(\Edp{1} \to \Edpdp{1}{2})$ & & $-2$ & & $+2$ & $0$ \\
  $f(\Edm{2} \to H_2)$ & & & $+1$ & $-1$ & $0$ \\
  $f(H_1 \to \Edp{2})$ & $-1$ & $+2$ & $-1$ & & $0$ \\
  \bottomrule
\end{tabular}
\end{center}

In the fourth row $f(H_1 \to \Edp{2})$, the contribution $+2$ from
component~2 with coefficient $c_2 = 2$ exactly cancels the
contributions $-1, -1$ from components~1 and~3.

\medskip
\textbf{(ii) Proof that $c^\top L = \gb_{a_0, b_n^-}$}.

Contributions to $c^\top L$ arise only from components whose brackets
involve even--odd pairs with a $\gamma$-deformation term.

\begin{itemize}[nosep]
\item Component 1: $(H_{n-1}, \Edpdm{1}{n})$ is an even--even pair
  $\to$ no $\gamma$ $\to$ contribution $0$.
\item Component 2: $(H_{n-1}, \Edp{1})$ is an even--odd pair
  $\to$ $\gamma$ present.
  \[
    (1 + \delta_{n,2}) \cdot \gamma(H_{n-1}, \Edp{1})\big|_{\Edpdp{1}{n}}
    = (1 + \delta_{n,2}) \cdot \gb_{a_0, b_n^-}.
  \]
\item Component 3: $(H_{n-1}, \Edm{n})$ is an even--odd pair
  $\to$ $\gamma$ present.
  The component
  \[
    \gamma(H_{n-1}, \Edm{n})\big|_{H_2}
  \]
  contains $\gb_{a_0, b_n^-}$ only when $H_2 = H_n$, i.e., only when
  $n = 2$:
  \begin{align*}
    (-1) \cdot \gamma(H_{n-1}, \Edm{n})\big|_{H_2}
    &= -\delta_{n,2} \cdot \gb_{a_0, b_n^-}.
  \end{align*}
\item Component 4: $(\Edpdm{1}{n}, \Edp{1})$ is an even--odd pair,
  but the $\gamma$ output has no $\gb_{a_0, b_n^-}$ component in
  the $H_2$ direction $\to$ contribution $0$.
\end{itemize}

Summing up:
\[
  c^\top L = (1 + \delta_{n,2}) \cdot \gb_{a_0, b_n^-}
    + (-\delta_{n,2}) \cdot \gb_{a_0, b_n^-}
    = 1 \cdot \gb_{a_0, b_n^-}.
\]
This holds for all $n \geq 2$.
\end{proof}

\begin{remark}[Geometric meaning of the unified coefficient]
In $c_2 = 1 + \delta_{n,2}$, we have
$\delta_{n,2} = \langle e_{n-1} - e_n, e_1 \rangle$, which is
naturally determined by root geometry. When $n = 2$, $e_{n-1} = e_1$
so $H_{n-1}$ acts non-trivially on $\Edp{1}$, whereas for $n \geq 3$
it acts trivially by orthogonality. This coefficient precisely
compensates for the additional $\gamma$-contribution (the
$-\delta_{n,2}$ from component~3) that is specific to $n = 2$,
thereby preserving $c^\top L = 1$.
\end{remark}

\section{Main theorem and remarks}\label{sec:main}

\begin{theorem}[Triviality criterion for B(0,n)]\label{thm:main}
For $n \geq 2$, the $\gamma$-deformation of
$B(0,n) = \osp(1|2n)$ is trivial if and only if all deformation
parameters vanish (in the sense of Definition~\ref{def:triviality}):
\[
  \gb_{a_0, b_j^+} = \gb_{a_0, b_j^-} = 0 \quad (j = 1, \ldots, n).
\]
\end{theorem}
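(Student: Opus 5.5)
The proof splits into the two directions, and the substance is already contained in the certificate propositions; the remaining work is to assemble them.

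\emph{Sufficiency (``if'').} Suppose all $\gb_{a_0,b_j^\pm}$ vanish. Then by~\eqref{eq:deformation} the modification of $a_0^2$ is absent and $\gamma = 0$, since $\gamma$ is linear in $\gb$. Taking $f = 0$ gives $\delta f = 0 = \gamma$. So the deformation is trivial in the sense of Definition~\ref{def:triviality}.

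\emph{Necessity (``only if'').} Suppose $\delta f = \gamma$ for some odd $f$. I first use the weight decomposition: both $\delta f$ and $\gamma$ respect weights, so the equation splits into the independent sector equations~\eqref{eq:matrix}, $A_\mu \mathbf f_\mu = L_\mu \boldsymbol{\gb}_\mu$, for each $\mu = \pm e_j$. Each such sector involves exactly one parameter, namely $\gb_{a_0,b_j^\pm}$. For each of the $2n$ parameters I then pick the matching certificate:
\begin{itemize}
\item for $\gb_{a_0,b_j^+}$ with $1\le j\le n-1$, use $c_j^+$ from Proposition~\ref{prop:familyI}, which gives $\lambda=4$;
\item for $\gb_{a_0,b_j^-}$ with $1\le j\le n-1$, use $c_j^-$ from Proposition~\ref{prop:familyII}, which gives $\lambda=2$;
\item for $\gb_{a_0,b_n^\pm}$, use $c_n^\pm$ from Proposition~\ref{prop:familyIII}, which gives $\lambda=1$.
\end{itemize}
In every case $\lambda\neq 0$ in characteristic~$0$. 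Lemma~\ref{lem:certificate} then forces each parameter to vanish.

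The hypothesis $n\ge 2$ enters in two places. Families I and II require $n\ge j+1$ (Lemma~\ref{thm:n-inv}), which holds exactly for $j\le n-1$. Family III needs $H_{n-1}$ and the elements $E_{\delta_1\pm\delta_n}$, which exist only when $n\ge 2$.

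The main obstacle is bookkeeping: I must check that each certificate really lives in the sector containing its parameter. Each component $(\delta f)(X_k,Y_k)|_{Z_k}$ must satisfy $\mathrm{wt}(Z_k)-\mathrm{wt}(X_k)-\mathrm{wt}(Y_k)=\mu$ for the $\mu$ attached to that $\gb$; otherwise $c^\top A_\mu$ and $c^\top L_\mu$ would be computed in the wrong system. I would verify this weight condition row by row for a representative certificate of each family, and cover the remaining cases by the $W(B_n)$-symmetry used in Proposition~\ref{prop:dim}.

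The second concern is that these certificates extract scalar coefficients from an odd parameter, as in Remark~\ref{rem:parity}. I would justify this by working componentwise in each parameter direction, so that $\lambda\gb_i=0$ with $\lambda\in F^\times$ yields $\gb_i=0$.
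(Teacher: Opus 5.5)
Your proposal is correct and is essentially the paper's own proof: $f=0$ gives sufficiency, and necessity follows by applying Lemma~\ref{lem:certificate} to the $2n$ certificates of Propositions~\ref{prop:familyI}, \ref{prop:familyII} and~\ref{prop:familyIII}, with $\lambda = 4, 2, 1$. The weight-sector bookkeeping you add is not logically required, since any combination of component equations in which all $f$-variables cancel is already an obstruction, whatever sectors the components come from. If you do carry it out on Family~III as printed, however, you will find that components~1 and~4 of $c_n^-$ have weight shift $-2e_1+e_n$ rather than $e_n$, so $\Edpdm{1}{n}$ there must be read as $E_{-\delta_1-\delta_n}$ (and likewise for the output of component~2 of $c_n^+$).
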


\begin{proof}
$(\Leftarrow)$: If $\gb = 0$, then $\gamma = 0 = \delta(0)$, so the
deformation is trivial.

$(\Rightarrow)$: Assume the $\gamma$-deformation is trivial, i.e.,
there exists $f$ satisfying $\delta f = \gamma$.
By Propositions~\ref{prop:familyI}, \ref{prop:familyII},
and~\ref{prop:familyIII}, certificates exist for each $\gb$ parameter:

\begin{center}
\begin{tabular}{lll}
  \toprule
  Parameter & Certificate & $c^\top L$ \\
  \midrule
  $\gb_{a_0, b_j^+}$ ($j = 1, \ldots, n-1$) & Family I: $c_j^+$ & $4 \cdot \gb_{a_0, b_j^+}$ \\
  $\gb_{a_0, b_j^-}$ ($j = 1, \ldots, n-1$) & Family II: $c_j^-$ & $2 \cdot \gb_{a_0, b_j^-}$ \\
  $\gb_{a_0, b_n^+}$ & Family III${}^+$: $c_n^+$ & $1 \cdot \gb_{a_0, b_n^+}$ \\
  $\gb_{a_0, b_n^-}$ & Family III${}^-$: $c_n^-$ & $1 \cdot \gb_{a_0, b_n^-}$ \\
  \bottomrule
\end{tabular}
\end{center}

Applying Lemma~\ref{lem:certificate} to each certificate, we obtain
$\gb_{a_0, b_j^\pm} = 0$ for all $2n$ parameters.
\end{proof}


In contrast, for $n = 1$, we can directly prove that the deformation is always trivial:

\begin{proposition}[Complete triviality of $B(0,1)$]\label{prop:B01}
For $B(0,1) = \osp(1|2)$, the $\gamma$-deformation is trivial for any
value of the $\gb$ parameters.
\end{proposition}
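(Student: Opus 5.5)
Since the parameters $\gb_{a_0,b_1^+}$ and $\gb_{a_0,b_1^-}$ are the only ones for $n=1$, and $\gamma$ depends linearly on $\gb$, it suffices to solve $\delta f = \gamma$ separately in the two weight sectors $\mu = \pm e_1$. Equivalently, we must show that $L_\mu \gb_\mu$ lies in the column space of $A_\mu$ in~\eqref{eq:matrix}. The two sectors are exchanged by the Weyl reflection $e_1 \mapsto -e_1$, which swaps $\Edp{1}\leftrightarrow\Edm{1}$ and $\Etdp{1}\leftrightarrow\Etdm{1}$ and sends $H_1 \mapsto -H_1$. So we treat $\mu = e_1$ and transport the solution $f$ by this automorphism.

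For $\mu = e_1$ and $n=1$, Proposition~\ref{prop:dim} gives $d_\mu = 6n-2 = 4$. The table in its proof shows that the only surviving categories are
\[
f(H_1\to\Edp{1}),\quad f(\Edm{1}\to H_1),\quad f(\Etdm{1}\to\Edm{1}),\quad f(\Edp{1}\to\Etdp{1}).
\]
The rows involving $j\neq 1$ are empty.

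We will enumerate all pairs of basis elements $(X,Y)$ and outputs $Z$ with $\mathrm{wt}(Z)-\mathrm{wt}(X)-\mathrm{wt}(Y)=e_1$; these are the rows of $A_\mu$ and $L_\mu$. The relevant pairs are $(H_1,\Etdp{1})\to\Edp{1}$, $(H_1,\Edp{1})\to H_1$, $(H_1,\Edm{1})\to\Etdm{1}$, $(\Etdp{1},\Edm{1})\to H_1$, $(\Etdp{1},\Etdm{1})\to\Edp{1}$, $(\Edp{1},\Edp{1})\to\Etdm{1}$, $(\Edm{1},\Edm{1})\to\Etdp{1}$, and so on. Using the oscillator realization $\Edp{1}=a_0b_1^+$, $\Edm{1}=a_0b_1^-$, $\Etdp{1}=(b_1^+)^2$, $\Etdm{1}=(b_1^-)^2$, together with the deformed relation~\eqref{eq:deformation}, we compute both the structure constants in $\delta f$ via~\eqref{eq:coboundary} and the $\gamma$-components. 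For example, $[\Edp{1},\Edm{1}]$ picks up a term $\kappa\,\gb_{a_0,b_1^+}\,b_1^+\cdot(\text{bosonic})$ from $a_0^2$.

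We then solve the resulting small linear system, with about ten equations in four unknowns, explicitly. The key point is that each $\gamma$-value is matched by a multiple of a single $f$-variable. The natural guess is $f(\Edm{1}\to H_1) = \lambda\,\gb_{a_0,b_1^+}$ and $f(\Edp{1}\to\Etdp{1}) = \lambda'\,\gb_{a_0,b_1^+}$, with the other two variables determined by consistency.

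The main obstacle is verifying consistency of this overdetermined system. Equivalently, we must show that the left null space of $A_\mu$ is annihilated by $L_\mu$, so that no certificate in the sense of Definition~\ref{def:certificate} exists. For $n\ge2$ the certificates use auxiliary elements such as $H_2$, $\Edpdm{1}{2}$ or $\Etdp{2}$, which are absent when $n=1$, and this explains heuristically why they disappear. The rigorous argument, however, is the direct computation of $\rank A_\mu = \rank[A_\mu\mid L_\mu]$. We would carry out this computation by hand and cross-check it with the exact rational computation described in the Appendix. Finally, we would confirm that the resulting $f$ is odd and satisfies $\delta f=\gamma$ on all pairs, not only on those in sector $\mu$; this holds because the other sectors have $\gamma=0$ and we take $f=0$ there.
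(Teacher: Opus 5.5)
\textbf{Genuine gap: the solvability step is deferred, not proved.} Your strategy is the paper's strategy: split $\delta f=\gamma$ into weight sectors and show $\rank A_\mu=\rank[A_\mu\mid L_\mu]$, or simply exhibit $f$. But the proposal stops exactly where the proof has to do its work. You name consistency of the overdetermined system as the main obstacle and then only promise to compute it. Nothing you write shows that $L_\mu\boldsymbol{\gb}_\mu$ lies in the column space of $A_\mu$, and your ansatz leaves $\lambda,\lambda'$ and the other two unknowns unspecified.

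The paper closes this step in two ways. It computes exactly $\dim C^1=12$ and $\rank(A)=\rank([A\mid L])=10$. More usefully, it writes down a sparse solution, with all other components zero:
\begin{itemize}[nosep]
\item for $\gb_{a_0,b_1^+}$: $f(H_1)=-\gb_{a_0,b_1^+}\Edm{1}$ and $f(\Etdp{1})=-2\gb_{a_0,b_1^+}\Edp{1}$;
\item for $\gb_{a_0,b_1^-}$: $f(H_1)=-\gb_{a_0,b_1^-}\Edp{1}$ and $f(\Etdm{1})=-2\gb_{a_0,b_1^-}\Edm{1}$.
\end{itemize}
With this, $\delta f=\gamma$ becomes a finite direct check. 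Writing both solutions explicitly also removes the need for your Weyl-reflection transport. That transport would otherwise require checking that the reflection (e.g.\ $b_1^+\mapsto b_1^-$, $b_1^-\mapsto -b_1^+$) carries the $\gb_{a_0,b_1^+}$-part of $\gamma$ to the $\gb_{a_0,b_1^-}$-part.

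\textbf{Your row list is wrong.} Of the triples you list for $\mu=e_1$, the first four are $(H_1,\Etdp{1})\to\Edp{1}$, $(H_1,\Edp{1})\to H_1$, $(H_1,\Edm{1})\to\Etdm{1}$ and $(\Etdp{1},\Edm{1})\to H_1$. These are the Family~I rows with $H_2$ replaced by $H_1$, and they have $\mathrm{wt}(Z)-\mathrm{wt}(X)-\mathrm{wt}(Y)=-e_1$. They involve unknowns such as $f(\Etdp{1}\to\Edp{1})$ and $f(H_1\to\Edm{1})$, which are not among your four. The triples $(\Edp{1},\Edp{1})\to\Etdm{1}$ and $(\Edm{1},\Edm{1})\to\Etdp{1}$ are worse: they have weight defect $\mp 4e_1$, and they have the wrong parity, since for an odd cochain an odd--odd pair must land in $\g_{\bar{1}}$. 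The actual rows of the $+e_1$ sector are these nine triples: $(H_1,\Etdm{1})\to\Edm{1}$, $(\Etdp{1},\Etdm{1})\to\Edp{1}$, $(H_1,\Edp{1})\to\Etdp{1}$, $(H_1,\Edm{1})\to H_1$, $(\Etdp{1},\Edm{1})\to\Etdp{1}$, $(\Etdm{1},\Edp{1})\to H_1$, $(\Etdm{1},\Edm{1})\to\Etdm{1}$, $(\Edp{1},\Edm{1})\to\Edp{1}$, $(\Edm{1},\Edm{1})\to\Edm{1}$.

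\textbf{Your ansatz attaches the parameter to the wrong sector.} In the paper's computed data, the $\gb_{a_0,b_1^+}$-part of $\gamma$ is absorbed in weight $-e_1$. Both its explicit solution above and the Family~I certificate for $\gb_{a_0,b_1^+}$ live there. This contradicts the informal remark in \S\ref{sec:coboundary-def}, which you followed. The $+e_1$ sector instead carries $\gb_{a_0,b_1^-}$. It is solved by $f(H_1\to\Edp{1})=-\gb_{a_0,b_1^-}$ and $f(\Etdm{1}\to\Edm{1})=-2\gb_{a_0,b_1^-}$, with the two unknowns you singled out equal to zero (up to adding an element of $\Ker\delta$ of that weight).

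To turn the plan into a proof, correct the row list and then either compute the $9\times 4$ rank or verify this explicit $f$ directly.
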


\begin{proof}
We verify the condition $\im(L) \subseteq \im(A)$, which is equivalent
to the existence of a solution $f$ to $\delta f = \gamma(\gb)$ for all $\gb$.
This condition is characterized by
\[
  \rank([A \mid L]) = \rank(A).
\]
By exact computation over $\mathbb{Q}$ using the structure constants of
$B(0,1)$ (see Appendix~\ref{sec:implementation}), we obtain:
\[
  \dim C^1 = 12,\quad \rank(A) = 10,\quad \rank([A \mid L]) = 10.
\]
Hence $\im(L) \subseteq \im(A)$, and for each basis direction of $\gb$,
a particular solution $f$ can be constructed explicitly:
\begin{align*}
\text{for } \gb_{a_0, b_1^+} \neq 0{:} &\quad
  f(H_1) = -\gb_{a_0,b_1^+} \cdot \Edm{1},\quad
  f(\Etdp{1}) = -2\,\gb_{a_0,b_1^+} \cdot \Edp{1}, \\
\text{for } \gb_{a_0, b_1^-} \neq 0{:} &\quad
  f(H_1) = -\gb_{a_0,b_1^-} \cdot \Edp{1},\quad
  f(\Etdm{1}) = -2\,\gb_{a_0,b_1^-} \cdot \Edm{1},
\end{align*}
with all other components of $f$ equal to zero.
One verifies directly that $\delta f = \gamma(\gb)$ for general
$\gb = \gb_{a_0,b_1^+}\,e^+ + \gb_{a_0,b_1^-}\,e^-$,
where $e^+ = (1,0)$ and $e^- = (0,1)$ are the canonical basis vectors
of the two-dimensional parameter space
$\{(\gb_{a_0,b_1^+}, \gb_{a_0,b_1^-})\}$.

Note that $\dim\Ker(\delta_\mu) = 2$ does not by itself imply the
existence of a solution: it only measures non-uniqueness.
The correct criterion used here is $\rank([A \mid L]) = \rank(A)$.
\end{proof}

\begin{corollary}
We have the following:
\begin{equation}
  \text{The } \gamma\text{-deformation of } B(0,n) \text{ is trivial} \iff
  \begin{cases}
    \text{always holds} & (n = 1), \\
    \gb = 0 & (n \geq 2).
  \end{cases}
\end{equation}
\end{corollary}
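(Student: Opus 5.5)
The corollary simply combines Theorem~\ref{thm:main} with Proposition~\ref{prop:B01}, so I will split on $n$ and handle each implication separately.

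\textbf{Case $n = 1$.} The right-hand side reads ``always holds''. The forward direction is then vacuous. The backward direction is exactly Proposition~\ref{prop:B01}: for every value of $(\gb_{a_0,b_1^+},\gb_{a_0,b_1^-})$ there is an odd $f$ with $\delta f = \gamma(\gb)$. For this to cover every $\gb$, and not only the basis directions, I will note that $\gamma$ is linear in $\gb$ (Section~\ref{sec:oscillator}) and $\delta$ is linear in $f$. Hence the sum of the two displayed particular solutions, scaled by the respective parameters, solves $\delta f = \gamma(\gb)$ for general $\gb$. Equivalently, one can cite $\rank([A\mid L]) = \rank(A) = 10$, which gives $\im(L)\subseteq\im(A)$.

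\textbf{Case $n \ge 2$.} This is Theorem~\ref{thm:main} verbatim. The $(\Leftarrow)$ direction is immediate from $\gamma(0) = 0 = \delta(0)$. For $(\Rightarrow)$, suppose $\delta f = \gamma$. I will decompose by weight sectors $\mu = \pm e_j$ as in \eqref{eq:matrix}. For each of the $2n$ parameters I will invoke the corresponding certificate:
\begin{itemize}[nosep]
\item Family~I (Proposition~\ref{prop:familyI}) for $\gb_{a_0,b_j^+}$ with $j<n$;
\item Family~II (Proposition~\ref{prop:familyII}) for $\gb_{a_0,b_j^-}$ with $j<n$;
\item Family~III (Proposition~\ref{prop:familyIII}) for $\gb_{a_0,b_n^\pm}$.
\end{itemize}
In each case Lemma~\ref{lem:certificate} applies with $\lambda \in \{4,2,1\}$, all nonzero in characteristic~$0$, and forces the parameter to vanish.

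The only point needing care is the one already settled upstream. For $n \ge 2$ the $n$-independence of Families~I/II (Lemma~\ref{thm:n-inv}) together with the $\delta_{n,2}$ correction in Family~III guarantees a valid certificate for every index $j$. I should also remark why the argument does not extend to $n=1$: Families~I/II require $j \le n-1$, and Family~III uses $H_{n-1}$, which does not exist when $n=1$. This is consistent with the dichotomy.
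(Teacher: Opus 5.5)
Your proposal is correct and follows the paper's own route. The paper gives no separate proof: the corollary is just Proposition~\ref{prop:B01} for $n=1$ combined with Theorem~\ref{thm:main} for $n\ge 2$, the latter proved by exactly the certificate argument via Lemma~\ref{lem:certificate} that you describe; your linearity remark for general $\gb$ (or, equivalently, $\rank([A\mid L])=\rank(A)$) matches what the paper itself says in Proposition~\ref{prop:B01}.
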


This dichotomy arises because for $n \geq 2$ the certificates force
$\gb = 0$, whereas for $n = 1$ the condition
$\rank([A \mid L]) = \rank(A)$ holds, meaning every $\gamma$ lies in
the image of $\delta$.
  

\begin{conjecture}[General $B(m,n)$]\label{conj:Bmn}
For $B(m,n) = \osp(2m+1|2n)$ with $m + n \geq 2$, the
$\gamma$-deformation is trivial if and only if $\gb = 0$.
\end{conjecture}

This conjecture has been computationally verified for small $m,n$
(by confirming $\rank([A \mid L]) = \rank(A) + \rank(L)$).
For $m \geq 1$, the deformation parameters involve coefficients in
$\mathbb{Q}(\sqrt{2})$ (due to the contributions from fermionic
oscillators $a_j^\pm$), so generalizing the proof requires rank
invariance over field extensions. This is guaranteed by the standard
fact that rank is preserved under scalar extension: for a matrix $M$
over a field $F$, $\rank_{F'}(M) = \rank_F(M)$ for any extension
$F'/F$.


\section*{Human-AI collaboration}

Our research was conducted through a structured human-AI collaborative workflow, leveraging large language models (e.g., GitHub Copilot) for computational verification, code generation, and drafting, while the design of proof strategies and mathematical validation were led by human researchers. Details of the methodology, including workflow design, role allocation, data pipeline, and lessons learned, are documented in a separate paper~\cite{AIWorkflow}.

\appendix
\section{Implementation of computational verification}\label{sec:implementation}

The theorems and propositions in this paper were verified using
computational methods implemented in Python.
Below we outline the approach.
We note that the computational artifacts for this work are available at \cite{OspTriviality}, which includes a Jupyter notebook with the verification code and data files containing the structure constants and $\gamma$-structures for small values of $n$.

\subsection*{Coefficient field and exact arithmetic}\label{sec:field-computation}

To ensure exactness of the computations, the deformation parameters
were treated as $\gb_{a_0, b_j^s} \in \mathbb{Q}$, and exact rational
arithmetic was performed using Python's \texttt{fractions.Fraction}.
Since the theoretical part of this paper (\S\ref{sec:coboundary}--\S\ref{sec:main}) involves structure
constants and certificate coefficients that are all integers,
the results hold over any field $F$ of characteristic~$0$.

\subsection*{Rank formula (computational verification)}\label{sec:rank-computation}

The exact rank of the structure constant matrix $A_\mu$ (over
$\mathbb{Q}$) was computed for $n = 1,\ldots, 5$, yielding the following:
\begin{center}
\begin{tabular}{cccc}
\toprule
$n$ & $\dim C^1$ & $\rank(A_\mu)$ & $\dim \Ker(\delta_\mu)$ \\
\midrule
$1$ & $12^*$ & $10^*$ & $2^*$ \\
\hline 
$2$ & $10$   & $9$    & $1$   \\
$3$ & $16$   & $15$   & $1$   \\
$4$ & $22$   & $21$   & $1$   \\
$5$ & $28$   & $27$   & $1$   \\
\bottomrule
\end{tabular}
\smallskip

\noindent
{\small ${}^*$For $n=1$, values are for the full coboundary matrix $A$
(all weight sectors combined).
For $n \geq 2$, values are per weight sector $\mu = \pm e_j$.}
\end{center}

From this data, the following conjecture for $n \geq 2$ is suggested:
\[
  \rank(A_\mu) = 6n - 3,
  \qquad
  \dim \Ker(\delta_\mu) = 1.
\]
The existence of certificates (\S\ref{sec:certificate}) theoretically
establishes $\corank(A_\mu) \geq 1$, but
$\corank(A_\mu) \leq 1$ remains a computational observation.
Note that the proof of the main theorem (Theorem~\ref{thm:main}) does
not require this equality.

\subsection*{Verification environment}

\begin{itemize}[nosep]
\item \textbf{Hardware}: Apple M3 Ultra (28-core CPU, 512GB RAM)
\item \textbf{Language}: Python 3.12
\item \textbf{Numerical computation}: NumPy (numerical matrix rank computation)
\item \textbf{Symbolic computation}: SymPy (exact rank computation, symbolic verification)
\item \textbf{Rational arithmetic}: \texttt{fractions.Fraction} (Python standard library;
  exact computation of certificate coefficients and $c^\top A$, $c^\top L$)
\item \textbf{Data format}: JSON (serialization of structure constants and $\gamma$-structures)
\end{itemize}

\subsection*{Verification items and corresponding scripts}

\begin{center}
\begin{tabular}{ll}
  \toprule
  Verification item & Corresponding proposition \\
  \midrule
  Rank verification & Rank formula above \\
  Certificate algebraic verification & Propositions~\ref{prop:familyI}--\ref{prop:familyIII} \\
  $n$-invariance verification & Lemma~\ref{thm:n-inv} \\
  Dimension formula verification & Proposition~\ref{prop:dim} \\
  \bottomrule
\end{tabular}
\end{center}

\subsection*{Data pipeline}

The structure constants are automatically generated from the
computational rules of the oscillator algebra:
\begin{enumerate}[nosep]
\item Realize the generators of $\osp(1|2n)$ in terms of oscillators
  and compute all brackets based on PBW
  (Poincar\'e--Birkhoff--Witt) canonical ordering.
\item Save the results in JSON format
  (\texttt{data/algebra\_structures/}).
\item Save the $\gamma$-deformation structure (correction terms based
  on equation~\eqref{eq:deformation}) similarly in JSON
  (\texttt{data/gamma\_structures/}).
\item Each verification script reads the JSON files and performs
  verification using exact rational arithmetic over $\mathbb{Q}$.
\end{enumerate}

All $\sum_{n=2}^{5} 2n = 4+6+8+10 = 28$ certificates for $n = 2, 3, 4, 5$ ($\osp(1|4)$ through
$\osp(1|10)$) have been confirmed to satisfy $c^\top A = 0$ and
$c^\top L \neq 0$.

\subsection*{Preliminary verification for general case (supplement for conjecture)}\label{sec:conj-verification}

As evidence for Conjecture~\ref{conj:Bmn}, we computationally verified the rank condition $\rank([A \mid L]) = \rank(A) + \rank(L)$ for types with $m \geq 1$, $n \geq 1$, and $m + n \leq 6$.

\begin{center}
\begin{tabular}{lccccl}
  \toprule
  Type & $\osp$ & dim & size of $\gb$ & Computation time & Verdict \\
  \midrule
  $B(1,1)$ & $\osp(3|2)$ & $12$ & $6$  & $< 1$s & PASS \\
  $B(1,2)$ & $\osp(3|4)$ & $25$ & $12$  & $1.4$s & PASS \\
  $B(1,3)$ & $\osp(3|6)$ & $42$ & $18$  & $37.9$s & PASS \\
  $B(2,1)$ & $\osp(5|2)$ & $23$ & $10$  & $0.8$s & PASS \\
  $B(2,2)$ & $\osp(5|4)$ & $40$ & $20$  & $25.8$s & PASS \\
  $B(3,1)$ & $\osp(7|2)$ & $38$ & $14$  & $18.2$s & PASS \\
  $B(3,2)$ & $\osp(7|4)$ & $59$ & $28$  & $461$s & PASS \\
  $B(3,3)$ & $\osp(7|6)$ & $84$ & $42$  & $\sim$1--3h (est.) & --- \\
  \bottomrule
\end{tabular}
\end{center}

\noindent
The $B(3,3)$ computation time is extrapolated from scaling analysis
of the measured data for $B(m,n)$ with $m+n \leq 5$.
The coboundary matrix $A$ for $B(3,3)$ has shape $296352 \times 3528$,
and the full SVD requires approximately 700\,GB of memory,
which exceeds available hardware (512\,GB).
A memory-efficient SVD implementation or distributed computation
would be needed to complete this verification.

The verification script \texttt{verify\_Bmn\_rank.py} computes the rank of
$[A \mid L]$ via SVD with a threshold of $0.1$ on the smallest singular value.
The computation is performed over $\mathbb{R}$ via embedding from
$\mathbb{Q}(\sqrt{2})$, which preserves rank.
An optional \texttt{--sympy} flag enables exact rank computation
over algebraic number fields for cross-validation.

\medskip\noindent\textbf{Reproducibility metadata.}
The verification artifacts (scripts and data) are available
at~\cite{OspTriviality}, release tag \texttt{v1.0}.
The exact command is
\path{python scripts/verify_Bmn_rank.py --max-m 3 --max-n 3}.
For $B(1,1)$ and $B(2,1)$, exact-rank cross-checks via SymPy
confirmed agreement with the SVD-based verdicts.
For larger cases, the smallest non-zero singular value exceeds
$1.0$ (well above the threshold $0.1$), providing robust rank
decisions.


\end{document}